\newenvironment{thenum}{\begin{enumerate}[label= (\roman*),topsep=2pt,partopsep=0mm,parsep=0mm,itemsep=1pt]}{\end{enumerate}}
\crefname{enumi}{}{}
\Crefname{enumi}{}{}
\crefname{equation}{equation}{equations}
\Crefname{equation}{equation}{equations}
\declaretheorem{theorem}
\declaretheorem[sibling=theorem]{proposition}
\declaretheorem[sibling=theorem]{corollary}
\declaretheorem[style=definition]{definition}
\declaretheorem[numberwithin=theorem]{claim}
\declaretheorem{question}
\setlist{noitemsep}
\newcommand{\p}[1]{{\tilde{#1}}}
\newcommand{\B}{\mathcal{B}}
\newcommand{\A}{\mathcal{A}}
\newcommand{\ol}[1]{\overline{#1}}
\newcommand{\LR}{\Leftrightarrow}
\newcommand{\ra}{\rightarrow}
\newcommand{\eidomain}[2]{\mc{D}om_{\mc{#1}}^\mc{#2}}
\newcommand{\eidomstar}[2]{{\mc{D}om^*}_{\mc{#1}}^{\mc{#2}}}
\newcommand{\restrict}[1][]{{\upharpoonright}_{#1}}
\renewcommand{\phi}{\varphi}
\newcommand{\mc}[1]{\mathcal{#1}}
\newcommand{\mf}[1]{\mathfrak{#1}}
\newcommand{\Sicom}[1]{\Sigma^{\mathrm{c}}_{#1}}
\newcommand{\Decom}[1]{\Delta^{\mathrm{c}}_{#1}}
\DeclareMathOperator{\rng}{range}
\newcommand{\presentation}[1]{{\tilde{\mathcal{#1}}}}
\tikzset{negated/.style={
        decoration={markings,
            mark= at position 0.5 with {
                \node[transform shape] (tempnode) {$\backslash$};
            }
        },
        postaction={decorate}
    }
}
\title{On functors enumerating structures}
\subjclass[2010]{03C57}
\author{Dino Rossegger}
\address{Institute of Discrete Mathematics and Geometry, Vienna University of Technology, Austria}
\email{dino.rossegger@tuwien.ac.at}
\thanks{The author was supported by the Austrian Science Fund FWF through project P 27527.}
\urladdr{dmg.tuwien.ac.at/rossegger}
\begin{document}
 \begin{abstract}
   We study a new notion of reduction between structures called enumerable functors related to the recently investigated notion of computable functors. Our main result shows that enumerable functors and effective interpretability with the equivalence relation computable are equivalent. We also obtain results on the relation between enumerable and computable functors.
 \end{abstract}
\maketitle
\section{Introduction}\label{sec:intro}
In computable structure theory we study the algorithmic complexity of mathematical structures. One goal in this field is to compare structures, or classes of structures with respect to their computability theoretic properties. This is usually achieved by using reductions. Several different notions of reduction between structures are known, most notably Muchnik reducibility, Medvedev reducibility, computable functors, $\Sigma$-definability, and effective interpretability. The first three notions are computational, while the other two are syntactic, based on the model theoretic notion of interpretability. The study of computable functors was recently intiated by R.\ Miller, Poonen, Schoutens, and Shlapentokh~\cite{miller_computable_2015}. They are a strengthening of Medvedev reducibility. Harrison-Trainor, Melnikov, R.\ Miller, and Montalb\'an~\cite{harrison-trainor_computable_2017} showed that computable functors are equivalent to effective interpretability first studied by Montalb\'an~\cite{montalban_rice_2012}. In~\cite{harrison-trainor_borel_2016}, Harrison-Trainor, R.\ Miller, and Montalb\'an proved a similar result for Baire measurable functors and infinitary interpretability.
$\Sigma$-definability was introduced by Ershov~\cite{ershov_definability_????} and has since been heavily studied by Russian researchers~\cite{kalimullin_relations_2009,puzarenko_certain_2009,morozov_-definability_2008,stukachev_degrees_2007,stukachev_degrees_2008,stukachev_effective_2013}. Effective interpretability is equivalent to $\Sigma$-definability without parameters~\cite{montalban_rice_2012}.

Between classes of structures the most notable notions are computable embeddings, Turing computable embeddings, uniform transformations, HKSS interpretations and reduction by effective bi-interpretability. Turing computable embeddings~\cite{knight_turing_2007} are an analogue of Medvedev reducibility for classes of structures, while computable embeddings~\cite{calvert_comparing_2004} use enumeration reducibility, a well studied notion of reducibility in computability theory. Uniform transformations are based on computable functors and reduction by effective bi-interpretability~\cite{montalban_computability_2014} on effective interpretations. It was shown in~\cite{harrison-trainor_computable_2017} that these two notions are equivalent. Effective bi-interpretability is closely related to HKSS interpretations~\cite{hirschfeldt_degree_2002}. Hirschfeldt, Khoussainov, Shore, and Slinko~\cite{hirschfeldt_degree_2002} gave interpretations of graphs in several classes of structures. It turns out that with minor modifications of these interpretations one can obtain effective interpretations~\cite{montalban_computability_2014,rossegger_computable_2015}. As computable functors are a strengthening of Medvedev reducibility, uniform transformations are a strengthening of Turing computable embeddings.

In this paper we study \emph{enumerable functors}. Enumerable functors are a strengthening of computable embeddings. We prove that enumerable functors are at least as strong as computable functors and show that they are equivalent to a restricted version of effective interpretability if we focus on structures with universe $\omega$.  We obtain similar results for the related notions on classes of structures. The question if our notions are strictly stronger is still open.
\subsection{Notation}
Our computability theoretic notation is standard, see~\cite{soare_turing_2016} for background. Structures are in computable relational languages and have countable universe; we use calligraphic letters $\A,\B,\dots$ to denote structures and the matching upper case letters $A,B,\dots$ for their universes. If the context requires we write $R_i^\A$ to denote the interpretation of the $i^{\text{th}}$ relation symbol in the language $L$ in $\A$. We identify structures by their \emph{atomic diagram}. The atomic diagram of a structure $\A$ is the set of atomic sentences and negations of atomic sentences true of $\A$ under a fixed G\"odel numbering. If we want to emphasize that we are talking about the atomic diagram of $\A$ we write $D(\A)$.
A structure $\A$ is computable if its atomic diagram is computable. We usually write $\p\A,\hat\A$ for isomorphic copies of $\A$.

We denote categories by fraktal letters $\mf{C},\mf{D},\dots$. We write $\A\in \mf{C}$ to say that $\A$ is an object of $\mf C$ and $f\in \mf{C}$ means that $f$ is an arrow in $\mf{C}$. We introduce all further category theoretic definitions when needed.
\subsection{Enumerable functors}\label{sec:enumfunc} Recall the notion of a functor between categories. In our setting the categories are classes of countable structures, i.e., collections of structures closed under isomorphism with isomorphisms as arrows.
\begin{definition}
	A \textit{functor from $\mf{C}$ to $\mf{D}$} is a map $F$ that assigns to each structure $\A\in \mf{C}$ a structure $F(\A) \in \mf{D}$, and assigns to each arrow $f:\A\ra\B\ \in \mf{C}$ a morphism $F(f):F(\A)\ra F(\B)\in \mf{D}$ so that the following two properties hold.
	\begin{thenum}
		\item $F(id_{\A})=id_{F(\A)}$ for every $\A \in \mf{C}$, and
		\item $F(f\circ g) = F(f) \circ F(g)$ for all morphisms $f,g\in \mf{C}$.
	\end{thenum}
\end{definition}
We abuse notation and write $F:\A \ra\B$ for a functor $F$ between the isomorphism classes of $\A$ and $\B$. The \emph{isomorphism class} of $\A$ denoted by $Iso(\A)$ has as objects
\[ \left\{ \p\A \mid \p A=\omega \land \p\A\cong \A\right\}\]
and as arrows all the isomorphisms between copies of $\A$.
Enumeration reducibility is a well studied notion in classic computability theory that has also been studied in the context of computable structure theory, see~\cite{soskova_enumeration_2017} for a survey.
For $A,B\subseteq \omega$, $B$ is \textit{enumeration reducible} to $A$ if there is an enumeration operator, i.e., a c.e.\ set $\Psi$ of pairs $(\alpha, b)$ where $\alpha$ is a finite subset of $\omega$ and $b\in \omega$, such that
\[ B=\left\{ b \mid (\exists \alpha \subseteq A) (\alpha, b)\in \Psi\right\}.\]
We may write $B$ as $\Psi^A$ because $B$ is unique given $\Psi$ and $A$. Using an enumeration operator and a Turing operator we now define enumerable functors.
\begin{definition}\label{def:enumfunc}
	An \emph{enumerable functor} is a functor $F:\mf{C}  \ra \mf{D}$ together with an enumeration operator $\Psi$ and a Turing operator $\Phi_{*}$ such that
	\begin{thenum}
		\item\label{eq:enumfunc1stdef} for every $\A\in \mf{C},\ \Psi^{\A}=F(\A)$,
		\item for every morphism $f:\A\ra \B\in \mf C$, $\Phi_*^{\A\oplus f\oplus \B}=F(f)$.
	\end{thenum}
	As for computable functors we often identify enumerable functors with their pair $(\Psi, \Phi_{*})$ of operators.
\end{definition}
This effective version of functors is inspired by computable embeddings, investigated in~\cite{calvert_comparing_2004}. There, a computable embedding from a class $\mf{C}$ to a class $\mf{D}$ is an enumeration operator $\Psi$ as defined in \cref{eq:enumfunc1stdef} of \cref{def:enumfunc} and the property that $\A \cong \B$ if and only if $\Psi^{\A} \cong \Psi^{\B}$.
Our definition is stronger than this, since we additionally require isomorphisms $F(f):F(\A)\ra F(\B)$ to be uniformly computable from $\A\oplus f\oplus \B$.

The authors of~\cite{calvert_comparing_2004} showed that substructures are preserved by computable embeddings. The same observation can be made for enumerable functors. The proof is exactly the same, for sake of completeness we state it here.
\begin{proposition}\label{prop:substruct}[Substructure preservation]
	Let $F:\mf{C} \ra  \mf{D}$ be an enumerable functor witnessed by $(\Psi,\Phi_{*})$. If $\A_1,\A\in \mf{C}$ and $\A_1\subseteq\A$, then $F(\A_1)\subseteq F(\A)$.
\end{proposition}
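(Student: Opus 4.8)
The plan is to prove this using only the enumeration operator $\Psi$; neither the Turing operator $\Phi_*$ nor functoriality is needed, and in fact the argument is literally the one used for computable embeddings. The whole proof rests on a single feature of enumeration operators: \emph{monotonicity in the oracle}, namely that $X \subseteq Y$ implies $\Psi^X \subseteq \Psi^Y$, since every finite $\alpha \subseteq X$ is also a finite subset of $Y$.

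First I would note that because the language is relational, $\A_1 \subseteq \A$ already gives $D(\A_1) \subseteq D(\A)$ as sets of natural numbers: the atomic sentences over the constants naming $A_1$ form, under the fixed G\"odel numbering, a subset of those over the constants naming $A$, and on this subset $\A_1$ and $\A$ agree on atomic (and negated atomic) truth, which is exactly what it means for $\A_1$ to be a substructure of $\A$. Combining this with monotonicity and clause~(i) of \cref{def:enumfunc} yields
\[ D(F(\A_1)) = \Psi^{\A_1} = \Psi^{D(\A_1)} \subseteq \Psi^{D(\A)} = \Psi^{\A} = D(F(\A)). \]

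It remains to upgrade the inclusion $D(F(\A_1)) \subseteq D(F(\A))$ to the substructure relation $F(\A_1) \subseteq F(\A)$ between the two structures (both of which lie in $\mf{D}$). The universe of $F(\A_1)$ is contained in that of $F(\A)$, as witnessed by the sentences $c = c$ appearing in $D(F(\A_1))$, and every positive atomic fact holding in $F(\A_1)$ holds in $F(\A)$ by the displayed inclusion. The one point needing care is the reverse direction required for an \emph{induced} substructure: if some relation holds in $F(\A)$ of a tuple all of whose entries lie in the universe of $F(\A_1)$, it must already hold in $F(\A_1)$. Here I would use that $D(F(\A_1))$, being the atomic diagram of an actual structure, is complete and consistent on its own universe --- were the relation to fail in $F(\A_1)$, the corresponding negated atomic sentence would belong to $D(F(\A_1)) \subseteq D(F(\A))$, contradicting the consistency of $D(F(\A))$. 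This last step is the only place where we use that $F$ takes values in genuine structures rather than arbitrary subsets of $\omega$; everything else is just monotonicity of $\Psi$.
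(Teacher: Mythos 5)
Your proof is correct and is essentially the paper's own argument: both rest solely on the monotonicity of the enumeration operator $\Psi$ in its oracle together with the observation that $\A_1\subseteq\A$ gives $D(\A_1)\subseteq D(\A)$. The only difference is that you spell out the routine passage from the diagram inclusion $D(F(\A_1))\subseteq D(F(\A))$ to the induced-substructure relation $F(\A_1)\subseteq F(\A)$, which the paper leaves implicit under its stated convention of identifying structures with their atomic diagrams.
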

\begin{proof}
	Assume $\A_1\subseteq\A$. If $\phi \in D(F(\A_1))$, then there is a finite set of formulas $\alpha \subseteq D(\A_1)$ such that $(\alpha,\phi)\in \Psi$ and since $D(\A_1)\subseteq D(\A)$, $\phi \in D(F(\A))$.
\end{proof}

As part of this article is concerned with the relationship between enumerable functors and computable functors we recall the notion of a computable functor first investigated in~\cite{miller_computable_2015}.
\begin{definition}\label{def:compfunc}
	A \emph{computable functor} is a functor $F:\mf{C} \ra \mf{D}$ together with two Turing operators $\Phi$ and $\Phi_{*}$ such that
	\begin{thenum}
		\item for every $\A\in \mf{C},\ \Phi^{\A}=F(\A)$,
		\item for every morphism $f:\A\ra \B\ \in \mf C$, $\Phi_*^{\A\oplus f\oplus \B}=F(f)$.
	\end{thenum}
	We often identify a computable functor with its pair $(\Phi, \Phi_{*})$ of Turing operators witnessing its computability.
\end{definition}
The following notions originated in~\cite{harrison-trainor_computable_2017}.
\begin{definition}\label{def:effeciso}
	A functor $F: \mf{C} \ra \mf{D}$ is \textit{effectively (naturally) isomorphic} to a functor $G: \mf{C} \ra \mf{D}$ if there is a Turing functional $\Lambda$ such that for every $\A\in \mf{C}$, $\Lambda^{\A}$ is an isomorphism from $F(\A)$ to $G(\A)$ and the following diagram commutes for every $\A,\B \in \mf{C}$ and every morphism $h: \A\ra \B$:\\
	\begin{center}
    \vspace{-0.3cm}
		\begin{tikzpicture}
			\tikzset{
				>=stealth,
				auto,
				node distance=1.5cm,
				vertex/.style={circle,fill,align=center, minimum size=5pt, inner sep=0pt},
				every loop/.style={looseness=10}
			}
			\node (fb) at (0,0){$F(\A)$};
			\node (gb) [right = of fb] {$G(\A)$};
			\node (fb2) [below = of fb] {$F(\B)$};
			\node (gb2) [below = of gb] {$G(\B)$};

			\path[]
			(fb) [->] edge node {$\Lambda^{\A}$} (gb)
			(fb2) [->] edge node {$\Lambda^{\B}$} (gb2)
			(fb) [->] edge node [left] {$F(h)$} (fb2)
			(gb) [->] edge node {$G(h)$} (gb2);
		\end{tikzpicture}
	\end{center}
\end{definition}
Note that in the above definition it does not matter whether $F$ and $G$ are both computable functors or enumerable functors. Hence, it is legal to say that an enumerable functor is effectively isomorphic to an computable functor. Intuitively, two functors are effectively naturally isomorphic if they are equivalent up to computable isomorphism. Using this idea one can generalize the idea of an inverse.

Let $F:\mf{C}\ra\mf{D}$ and $G:\mf{D}\ra\mf{C}$ be functors such that $G\circ F$ and $F\circ G$ are effectively isomorphic to the identity functors $id_\mf{C}$ and $id_\mf{D}$ respectively. Let $\Lambda_\mf{C}$ be the Turing functional witnessing the effective isomorphism between $G\circ F$ and the identity functor $id_\mf{C}$, i.e., for any $\A \in \mf{C}$, $\Lambda_\mf{C}^{\A}: \A \ra G(F(\A))$. Define $\Lambda_\mf{D}$ similarly, i.e., $\Lambda_\mf{D}^{\B}: \B \ra F(G(\B))$ for any $\B \in \mf{D}$.
Then there are maps $\Lambda_\mf{D}^{F(\A)}: F(\A) \ra F(G(F(\A)))$ and $F(\Lambda_\mf{C}^{\A}): F(\A) \ra F(G(F(\A)))$. If these two maps, and the similarly defined maps for $\mf{D}$, agree for every $\A\in \mf{C}$ and $\B \in \mf{D}$, then we say that $F$ and $G$ are \emph{pseudo-inverses}.
\begin{definition}\label{def:enumbitrans}
 Two structures $\A$ and $\B$ are \emph{enumerably bi-transformable} if there exist enumerable functors $F: \A \ra \B$ and $G:\B \ra \A$ which are pseudo-inverses.
\end{definition}
Harrison-Trainor, Melnikov, R.\ Miller, and Montalb\'an~\cite{harrison-trainor_computable_2017} defined \emph{computably bi-transformable} which is analogous to our definition except that it uses computable functors.
\subsection{Reduction between classes}
\begin{definition}
			A class $\mf{C}$ is \textit{uniformly enumerably transformally reducible}, or \emph{u.e.t.~reducible}, to a class $\mf{D}$ if there exists a subclass $\mf{D}'\subseteq \mf{D}$, enumerable functors $F:\mf{C}\ra \mf{D}'$ and $G:\mf{D}'\ra \mf{C}$, and $F,\ G$ are pseudo-inverses. We say that a class is \textit{complete for u.e.t.~reducibility} if for every computable language $L$, the class of $L$-structures u.e.t.~reduces to it.
\end{definition}
The authors of~\cite{harrison-trainor_computable_2017} gave an analogous definition using computable functors called reduction by uniform transformation. In this paper we call it \emph{reduction by uniform computable transformations}, or short \emph{reduction by u.c.t.}\ to distinguish it from reduction by uniform enumerable transformation. It is obtained by swapping the enumerable functors in the definition by computable functors.
\section{On the relation between enumerable and computable functors}
Kalimullin and Greenberg independently showed that if a class is computably embeddable in another class, then it is also Turing computably embeddable, see~\cite[Proposition 1.4]{knight_turing_2007}. Using a similar procedure to the one given in their proof one can construct a computable functor from an enumerable functor.
\begin{theorem}\label{th:enumimplcompfunc}
	Let $F:\mf{C} \ra \mf{D}$ be an enumerable functor, then there is a computable functor $G$ effectively isomorphic to $F$.
\end{theorem}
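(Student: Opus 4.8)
The plan is to carry out the Kalimullin--Greenberg trick at the level of functors. The reason $F$ itself need not already be a computable functor is that the enumeration operator reveals $\Psi^{\A}=D(F(\A))$ only positively: the universe of $F(\A)$ may be a proper, merely $\A$-c.e.\ subset of $\omega$, so $D(F(\A))$ need not be computable from $\A$. The remedy is to produce a relabelled copy $G(\A)\cong F(\A)$ whose universe is all of $\omega$ and whose atomic diagram is uniformly computable from $\A$, and then to transport the morphisms of $F$ along the relabelling.

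First fix the pair $(\Psi,\Phi_{*})$ witnessing that $F$ is enumerable. For $\A\in\mf C$, run with oracle $\A$ the canonical enumeration of $\Psi^{\A}=D(F(\A))$, and let $a^{\A}_0,a^{\A}_1,\dots$ list the elements of the universe of $F(\A)$ in order of first appearance in that enumeration. Since $F(\A)$ is infinite, every element eventually shows up, so this lists the whole universe, and the map $\iota_{\A}\colon F(\A)\ra\omega$ sending $a^{\A}_k$ to $k$ is a bijection onto $\omega$ with $\iota_{\A}$ and $\iota_{\A}^{-1}$ both computable from $\A$, uniformly. Let $G(\A)$ be the pushforward of $F(\A)$ along $\iota_{\A}$: the structure with universe $\omega$ whose atomic diagram consists of all sentences obtained from a sentence in $D(F(\A))$ by replacing each element $a$ occurring in it by $\iota_{\A}(a)$. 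Then $\iota_{\A}$ is an isomorphism $F(\A)\ra G(\A)$.

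Next I check that $G$ is a computable functor. To decide whether an atomic sentence $R(n_1,\dots,n_k)$ lies in $D(G(\A))$, compute $a^{\A}_{n_1},\dots,a^{\A}_{n_k}$ (waiting until that many elements have appeared) and search $\Psi^{\A}$ until either $R(a^{\A}_{n_1},\dots,a^{\A}_{n_k})$ or its negation is enumerated; exactly one of these occurs, because $D(F(\A))$ is a complete atomic diagram over its universe, so the search halts and is uniform in $\A$. This yields a Turing operator $\Phi_G$ with $\Phi_G^{\A}=G(\A)$. For morphisms, given an isomorphism $f\colon\A\ra\B$ put $G(f):=\iota_{\B}\circ F(f)\circ\iota_{\A}^{-1}$; since $\iota_{\A}^{-1}\le_T\A$, $F(f)=\Phi_{*}^{\A\oplus f\oplus\B}$ and $\iota_{\B}\le_T\B$, all uniformly, this defines a Turing operator $\Phi_{G*}$ with $\Phi_{G*}^{\A\oplus f\oplus\B}=G(f)$. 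Functoriality is immediate: conjugating by the $\iota$'s preserves identities, as $F(id_{\A})=id_{F(\A)}$ forces $G(id_{\A})=id_{G(\A)}$, and a short computation using $F(g\circ f)=F(g)\circ F(f)$ shows $G(g\circ f)=G(g)\circ G(f)$.

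Finally, $G$ is effectively isomorphic to $F$ via the Turing functional $\Lambda$ with $\Lambda^{\A}:=\iota_{\A}$: each $\Lambda^{\A}$ is an isomorphism $F(\A)\ra G(\A)$, and for every morphism $h\colon\A\ra\B$ the naturality square of \cref{def:effeciso} commutes, since $G(h)\circ\Lambda^{\A}=(\iota_{\B}\circ F(h)\circ\iota_{\A}^{-1})\circ\iota_{\A}=\iota_{\B}\circ F(h)=\Lambda^{\B}\circ F(h)$. The one point that needs genuine care is the first step: arguing that the enumeration of $\Psi^{\A}$ actually exposes every element of the universe of $F(\A)$, so that $\iota_{\A}$ is a surjection onto $\omega$; this is where infiniteness of the structures in play is used. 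Everything else is a routine uniformity check.
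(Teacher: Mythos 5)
Your proof is correct and follows essentially the same route as the paper's: both construct $G(\A)$ by relabelling the elements of $F(\A)$ according to their first appearance in the enumeration of $\Psi^{\A}$, define $G(f)$ by conjugating $F(f)$ with the relabelling maps, and use those same maps as the Turing functional witnessing the effective isomorphism. The only (cosmetic) difference is that the paper relabels $b$ to the pair $\langle b,s\rangle$ with $s$ the stage of first appearance, keeping a computable-in-$\A$ universe of codes, whereas you collapse onto all of $\omega$ in order of appearance, which additionally uses that the structures in play are infinite.
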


\begin{proof}
	Let $F$ be witnessed by $(\Psi,\Phi_{*})$. Given some $\A\in \mf{C}$, let $\B=F(\A)=\Psi^{\A}$. We first show that there is a Turing functional $\Phi'$ transforming every such $\A$ into a structure $\p\B$ isomorphic to $\B$, i.e., $\Phi'^{\A}=\p\B$.

	Let $\langle \cdot,\cdot \rangle:\omega \times\omega \ra \omega$ be the standard computable pairing function. The universe of $\p\B$ is
	\[\p B=\{ \langle b,s\rangle \mid b\in B \land s=\mu x [b=b\in \Psi_x^{\A}].\]
	Here $\Psi_x$ is the approximation of $\Psi$ at stage $x$ of the enumeration. $\p B$ is computable relative to $\A$ as computing membership can be done by enumerating $\Psi$ until stage $s$ and checking if $s$ is the first step such that $b=b\in \Psi_s^{\A}$.
	For each $R_i$ with arity $r_i$ in the language of $\mf{D}$ define $R_i^{\p\B}$ as
	\[
		\begin{array}{lcr}
			(\langle x_1,s_1 \rangle, \dots,\langle x_{r_i},s_{r_i}\rangle) \in R_i^{\p\B}     & \LR & R_i(x_1,\dots,x_{r_i}) \in \Psi^{\A},\\
			(\langle x_1,s_{1} \rangle, \dots,\langle x_{r_i},s_{r_i}\rangle) \not\in R_i^{\p\B} & \LR & \neg R_i(x_1,\dots,x_{r_i}) \in \Psi^{\A}.
		\end{array}
	\]
	Since for all relations $R_i$ and all tuples $(\langle x_1,s_{1} \rangle, \dots,\langle x_{r_i},s_{r_i}\rangle)\in \p B^{r_i}$, either $R_i(x_1,\dots,x_{r_i}) \in \Psi^{\A}$ or $\neg R_i(x_1,\dots,x_{r_i}) \in \Psi^{\A}$ and $\p B$ is computable from $\A$, $\p\B\leq_T \A$. Furthermore the computation of $\p\B$ from $\A$ is uniform, hence there is a Turing functional $\Phi'$ that given $\A\in \mf{C}$ as oracle computes $\p\B$. Set $G(\A)=\p \B$, then $\Phi'$ is the first partial witness of computability of $G$.

	For $\A\in \mf{C}$ let $\theta^{\A}: F(\A)\ra G(\A)$ be defined by $x \ra \langle x,s\rangle$. $\theta^{\A}$ is uniformly computable from $\A$ and is an isomorphism between $F(\A)$ and $G(\A)$ by construction of $G(\A)$.
	For the second partial witness consider $f: \A\ra \p\A\in \mf{C}$, then $F(f):F(\A)\ra F(\p\A)$. Set $G(f)=\theta^{\p\A}\circ F(f)\circ (\theta^{\A})^{-1}$.
	As $\theta^{\A}$, $\theta^{\p\A}$ are uniformly computable from $\A$, respectively $\p\A$ in $\mf{C}$ and $F(f)$ is uniformly computable from $\A\oplus f\oplus \p\A$, there is a Turing operator, say $\Phi_{*}'$, such that
	\[\Phi_{*}^{\prime \A\oplus f\oplus \p\A}=G(f).\]
	It follows that $\Phi_{*}'$ qualifies as the second partial witness of computability of $G$. $G$ is a functor as for $\A\in \mf{C}$, $G(id_{\A})=\theta^{\A}\circ  F(id_\A) \circ (\theta^{\A})^{-1}=\theta^{\A}\circ (\theta^{\A})^{-1}=id_{G(\A)}$ and for $f: \A\ra \p\A, g:\p\A\ra\hat\A \in \mf{C}$,
	\[ G(g\circ f)=\theta^{\hat\A}\circ F(g\circ f)\circ (\theta^{\A})^{-1}
	=\theta^{\hat\A}\circ F(g)\circ (\theta^{\p\A})^{-1} \circ \theta^{\p\A} \circ F(f) \circ (\theta^{\A})^{-1}=G(g)\circ G(f).\]
	As argued above, the function $\theta^{\A}$, which induces the isomorphism between $F(\A)$ and $G(\A)$ is uniformly computable in $\mf{C}$ from $\A$. Hence, there is a Turing functional $\Lambda$ such that $\Lambda^{\A}=\theta^{\A}$. It witnesses the effective isomorphism between $F$ and $G$.
\end{proof}
		A similar result as \cref{th:enumimplcompfunc} holds for enumerable bi-\-transforma\-bility and computable bi-\-transforma\-bility.
		\begin{theorem}\label{th:enumbiimplcompbi}
			Let $\mc{A},\mc{B}$ be enumerably bi-transformable, then they are also computably bi-trans\-form\-able.
		\end{theorem}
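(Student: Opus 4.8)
The strategy is to apply \cref{th:enumimplcompfunc} to both halves of the bi-transformation and then check that the resulting computable functors remain pseudo-inverses. Fix enumerable functors $F\colon\A\ra\B$ and $G\colon\B\ra\A$ witnessing that $\A$ and $\B$ are enumerably bi-transformable, together with Turing functionals $\Theta_\A,\Theta_\B$ witnessing that $G\circ F$ is effectively isomorphic to $id_{Iso(\A)}$ and $F\circ G$ to $id_{Iso(\B)}$ and making $F,G$ pseudo-inverses; thus $\Theta_\A^{\mc A}\colon\mc A\ra G(F(\mc A))$ and $\Theta_\B^{\mc B}\colon\mc B\ra F(G(\mc B))$. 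By \cref{th:enumimplcompfunc} there are computable functors $\hat F,\hat G$ and Turing functionals $\Xi_F,\Xi_G$ witnessing that $F$ is effectively isomorphic to $\hat F$ and $G$ to $\hat G$, with $\Xi_F^{\mc X}\colon F(\mc X)\ra\hat F(\mc X)$ and $\Xi_G^{\mc Y}\colon G(\mc Y)\ra\hat G(\mc Y)$. We show that $\hat F$ and $\hat G$ are pseudo-inverses, which proves the theorem.

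We first record two easy remarks. First, being effectively isomorphic is an equivalence relation on functors: symmetry holds since $(\Lambda^{\mc X})^{-1}$ is uniformly computable from $\mc X$, and transitivity by composing the witnessing functionals. Second, it is compatible with composition: if functors $H,H'\colon\mf C\ra\mf D$ and $K,K'\colon\mf D\ra\mf E$ (each enumerable or computable) are witnessed effectively isomorphic by $\Lambda$ (with $\Lambda^{\mc A}\colon H(\mc A)\ra H'(\mc A)$) and $\Lambda'$, then $K\circ H$ and $K'\circ H'$ are effectively isomorphic via $\mc A\mapsto K'(\Lambda^{\mc A})\circ{\Lambda'}^{H(\mc A)}$, with naturality following from naturality of $\Lambda,\Lambda'$ and functoriality of $K'$. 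The one subtlety is uniform computability of this composite from $\mc A$ when $H$ is merely enumerable, as in our case: then $H(\mc A)$ is not given outright, but it is still uniformly computable from $\mc A$, since $\Xi_H^{\mc A}\colon H(\mc A)\ra\hat H(\mc A)$ and $\hat H(\mc A)$ are uniformly computable from $\mc A$ and the atomic diagram of $H(\mc A)$ is decided by pushing atomic formulas forward along $\Xi_H^{\mc A}$ and consulting the atomic diagram of $\hat H(\mc A)$; the morphism operator of the computable functor $K'$ then produces $K'(\Lambda^{\mc A})$ uniformly. Applying the second remark with $(H,H',K,K')=(F,\hat F,G,\hat G)$ and, dually, with $(G,\hat G,F,\hat F)$, and then the first remark, we obtain Turing functionals $\hat\Theta_\A,\hat\Theta_\B$ witnessing that $\hat G\circ\hat F$ is effectively isomorphic to $id_{Iso(\A)}$ and $\hat F\circ\hat G$ to $id_{Iso(\B)}$; we take these to be the explicit composites just described, so that for instance $\hat\Theta_\A^{\mc A}=\hat G(\Xi_F^{\mc A})\circ\Xi_G^{F(\mc A)}\circ\Theta_\A^{\mc A}$.

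It remains to verify the coherence condition making $\hat F,\hat G$ pseudo-inverses, namely $\hat\Theta_\B^{\hat F(\mc A)}=\hat F(\hat\Theta_\A^{\mc A})$ as isomorphisms $\hat F(\mc A)\ra\hat F(\hat G(\hat F(\mc A)))$ for every $\mc A\in Iso(\A)$, together with the dual identity with $\hat F,\hat G$ interchanged. This is the technical heart of the argument and the only place the full pseudo-inverse hypothesis on $F,G$ is used rather than merely that $G\circ F$ and $F\circ G$ are effectively isomorphic to the identities. Expanding both sides via the definitions of $\hat\Theta_\A,\hat\Theta_\B$ and functoriality of $\hat F$, one rewrites using the naturality squares of $\Theta_\B$, $\Xi_F$ and $\Xi_G$; after substituting the coherence identity $\Theta_\B^{F(\mc A)}=F(\Theta_\A^{\mc A})$ available since $F,G$ are pseudo-inverses, all transport maps cancel and the two sides coincide. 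The dual identity is symmetric. We expect this diagram chase, though routine, to be the main obstacle, since one must track carefully which functor is applied to which transport map.

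Finally, one may instead sidestep the chase by establishing once and for all that any computable functors $F',G'$ with $G'\circ F'$ and $F'\circ G'$ effectively isomorphic to the identities can have their witnessing functionals modified so that $F',G'$ become pseudo-inverses --- an effective version of the refinement of an equivalence of categories to an adjoint equivalence --- and applying this to $\hat F,\hat G$. Either way $\hat F$ and $\hat G$ are computable functors that are pseudo-inverses, so $\A$ and $\B$ are computably bi-transformable.
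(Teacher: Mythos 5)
Your proposal is correct and follows essentially the same route as the paper: apply \cref{th:enumimplcompfunc} to each functor, form the composite witnesses $\hat\Theta_\A^{\mc A}=\hat G(\Xi_F^{\mc A})\circ\Xi_G^{F(\mc A)}\circ\Theta_\A^{\mc A}$ (identical to the paper's $\Gamma_\A^{\p\A}=G'(\Theta^{\p\A})\circ\Omega^{F(\p\A)}\circ\Lambda_\A^{\p\A}$), address the same uniformity subtlety for oracles of the form $F(\p\A)$, and verify the pseudo-inverse coherence. The only difference is that you sketch the final diagram chase rather than writing it out, but the identities you cite --- naturality of $\Theta_\B$, $\Xi_F$, $\Xi_G$ together with $\Theta_\B^{F(\mc A)}=F(\Theta_\A^{\mc A})$ --- are exactly the ones the paper's explicit computation uses.
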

		\begin{proof}
			Let  $F: Iso(\A) \ra Iso(\B)$, $G: Iso(\B) \ra Iso(\A)$ be enumerable functors witnessing the enumerable bi-transformability between $\mc{A},\mc{B}$. By \cref{th:enumimplcompfunc} there are computable functors
			\[
				F': Iso(\mc{A}) \ra Iso(\mc{B}) \quad \text{and} \quad G': Iso(\mc{B}) \ra Iso(\mc{A}).
			\]
			Furthermore there are Turing operators $\Theta$ and $\Omega$ inducing the effective isomorphisms between $F$ and $F'$ and $G$ and $G'$ respectively, i.e.,
      $$ \Theta^{\p\A}: F(\p\A) \ra F'(\p\A) \quad\text{and} \quad\Omega^{\p\B}: G(\p\B) \ra G'(\p\B).$$
			Recall the Turing operators $\Lambda_{\A}$ and $\Lambda_{\B}$ witnessing that $F$ and $G$ are pseudo-inverses. For any $\p\A\in Iso(\A)$ and $\p\B \in Iso(\B)$,
      $$\Lambda_{\mc{A}}^{\p\A}: \p\A \ra G(F(\p\A)) \quad\text{and} \quad \Lambda_{\mc{B}}^{\p\B}: \p\B \ra F(G(\p\B)).$$
			Observe that the isomorphisms computed by $\Omega$ given $\p\B\in Iso(\B)$ as oracle are uniformly computable in $Iso(\mc{A})$ because $Iso(\mc{B})$ is uniformly computable in $Iso(\mc{A})$ since $F'$ is a computable functor, and that the analogous statement holds for $\Theta$. Consider the following diagram for any $\p\A \in Iso(\mc{A})$.
			\begin{center}
				\begin{tikzpicture}
					\tikzset{
						>=stealth,
						auto,
						node distance=1cm,
						vertex/.style={circle,fill,align=left, minimum size=5pt, inner sep=0pt},
						every loop/.style={looseness=10}
					}
					\node (a) at (0,0){$\presentation{A}$};
					\node (fa) [below = of a] {$F(\presentation{A})$};
					\node (gfa) [below = of fa] {$G(F(\presentation{A}))$};

					\node (fpa) [right = of a] {$F'(\presentation{A})$};
					\node (gpfpa) [right = of fpa] {$G'(F'(\presentation{A}))$};
					\node (gpfa) [right = of fa] {$G'(F(\presentation{A}))$};

					\path[dashed]
					(a) [->] edge node [left] {$F$} (fa)
					(fa) [->] edge node [left] {$G$} (gfa)
					(a) [->] edge node {$F'$} (fpa)
					(fpa) [->] edge node {$G'$} (gpfpa);
					\path[]
					(fa) [->] edge node [right]{$\Theta^{\p\A}$} (fpa)
					(gfa) [->] edge node [right]{$\Omega^{F(\p\A)}$} (gpfa)
					(gpfa) [->] edge node [right]{$G'(\Theta^\p\A)$} (gpfpa);
					\path[bend right=45 ]
					(a) [->] edge node [left] {$\Lambda_\mc{A}^\p\A$} (gfa);
				\end{tikzpicture}
			\end{center}
			Note that $\Theta^{\p\A}:F(\p\A)\ra F'(\p\A)\in \mf{D}$ and hence $G'(\Theta^{\p\A})$ is an isomorphism from $G'(F(\p\A))$ to $G'(F'(\p\A))$. Analogous diagrams can be drawn for any $\p\B \in Iso(\p\B)$.
			We therefore define $\Gamma_\A^{\p\A}$ and $\Gamma_\B^{\p\B}$ as
					$$\Gamma_\A^{\p\A}=  G'(\Theta^{\p\A}) \circ \Omega^{F(\p\A)}\circ \Lambda_\A^{\p\A} \quad\text{and}\quad
					\Gamma_\B^{\p\B}= F'(\Omega^{\p\B}) \circ \Theta^{G(\p\B)}\circ\Lambda_\B^{\p\B}.$$
			It is easy to see from the above diagram that they induce the wanted isomorphisms $\Gamma_\A^{\p\A}: \A \ra G'(F'(\p\A))$, respectively $\Gamma_\B^{\p\B}:\B\ra F'(G'(\p\B))$ for all $\p\A\in Iso(\A)$ and all $\p\B\in Iso(\B)$.
			Since all functions in their definition are uniformly computable in $\A$, respectively $\B$, $\Gamma_\mc{A}$, $\Gamma_\mc{B}$ witness that $G'\circ F'$ and $F'\circ G'$ are effectively isomorphic to the identity functors $id_\mf{C}$, respectively $id_\mf{D}$. It remains to show that $\Gamma_\mc{B}^{F'(\p\A)}=F'(\Gamma_\A^{\p\A})$ and $\Gamma_\mc{A}^{G'(\p\B)}=G'(\Gamma_\B^{\p\B})$. We will prove the first statement, the proof of the second statement is analogous.

			First recall that by the construction of $F',G'$ in \cref{th:enumimplcompfunc}, for any isomorphism $f:\p\A \ra \hat\A$ between two copies $\p\A, \hat\A$ of $\A$ and for any isomorphism $g:\p\B \ra \hat\B$ between two copies $\p\B, \hat\B$ of $\B$
			\[
				F'(f)=\Theta^{\hat\A}\circ F(f)\circ(\Theta^{\p\A})^{-1}\quad \text{and} \quad G'(g)=\Omega^{\hat\B}\circ G(g)\circ(\Omega^{\p\B})^{-1}
			\]
      because $F,F'$ and $G,G'$ are effectively isomorphic.
			Now, let $\p\A\in Iso(\A)$, then
			\[ G'(\Theta^{\p\A})=\Omega^{F'(\p\A)}\circ G(\Theta^{\p\A})\circ (\Omega^{F(\p\A)})^{-1}.\]
			Therefore $\Gamma_\A^{\p\A}=\Omega^{F'(\p\A)}\circ G(\Theta^{\A})\circ \Lambda^{\p\A}_\A$ and
			\[ F'(\Gamma^{\p\A}_\A)=F'(\Omega^{F'(\p\A)})\circ F'(G(\Theta^{\p\A})) \circ F'(\Lambda_\A^{\p\A}).\]
			Furthermore, $F'(\Lambda_\A^{\p\A})=\Theta^{G(F(\p\A))}\circ F(\Lambda_\A^{\p\A})\circ (\Theta^{\p\A})^{-1}$. It follows that
			\[F'(\Gamma^{\p\A}_\A)=F'(\Omega^{F'(\p\A)})\circ F'(G(\Theta^{\p\A}))\circ\Theta^{G(F(\p\A))}\circ F(\Lambda_\A^{\p\A})\circ (\Theta^{\p\A})^{-1}.\]
			Notice that $F'(G(\Theta^{\p\A}))=\Theta^{G(F'(\p\A))}\circ F(G(\Theta^\p\A))\circ (\Theta^{G(F(\p\A))})^{-1}$, hence
			\[F'(\Gamma^{\p\A}_\A)=F'(\Omega^{F'(\p\A)})\circ \Theta^{G(F'(\p\A))}\circ F(G(\Theta^\p\A))\circ F(\Lambda_\A^{\p\A})\circ (\Theta^{\p\A})^{-1}.\]

			Consider $\Gamma_\B^{F'(\p\A)}=F'(\Omega^{F'(\p\A)}) \circ \Theta^{G(F'(\p\A))}\circ\Lambda_\B^{F'(\p\A)}$ and recall that $\Lambda_\B$ witnesses the effective isomorphism between $id_{Iso(\B)}$ and $F\circ G$. As $F'(\p\A)$ and $F(\p\A)$ are both in $Iso(\B)$ we have by \cref{def:effeciso} that $\Lambda_\B^{F'(\p\A)}\circ \Theta^{\p\A}=F(G(\Theta^{\p\A}))\circ\Lambda_\B^{F(\p\A)}$ and thus
      \[ \Lambda_\B^{F'(\p\A)}=F(G(\Theta^{\p\A}))\circ \Lambda_\B^{F(\p\A)}\circ (\Theta^{\p\A})^{-1}.\]
      Because $\Lambda_\B^{F(\p\A)}=F(\Lambda_\A^{\p\A})$
			\[\Gamma_\B^{F'(\p\A)}=F'(\Omega^{F'(\p\A)}) \circ \Theta^{G(F'(\A))}\circ F(G(\Theta^{\p\A}))\circ F(\Lambda_\A^{\p\A})\circ {(\Theta^{\p\A})}^{-1}=F'(\Gamma_\A^{\p\A}).\]
			By the same argument $G'(\Gamma^{\p\B}_\B)=\Gamma^{G'(\p\B)}_\A$ for all $\p\B\in Iso(\B)$. It follows that $F'$ and $G'$ are pseudo-inverses.
		\end{proof}
    By adapting the proof of \cref{th:enumbiimplcompbi} we get the same result for u.e.t.~reduction.
		\begin{corollary}\label{cor:enumimpliescomp}
			Let $\mf{C}$ be uniformly enumerably transformally reducible to $\mf{D}$, then $\mf{C}$ is uniformly computably transformally reducible to $\mf{D}$.
		\end{corollary}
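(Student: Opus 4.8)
The plan is to reuse the construction in the proof of \cref{th:enumbiimplcompbi} almost verbatim, replacing the classes $Iso(\A)$ and $Iso(\B)$ there by $\mf{C}$ and a suitable subclass of $\mf{D}$; identifying that subclass is the only genuinely new point.

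Let $F\colon\mf{C}\ra\mf{D}'$ and $G\colon\mf{D}'\ra\mf{C}$ be the enumerable functors witnessing that $\mf{C}$ is u.e.t.~reducible to $\mf{D}$, and let $\Lambda_{\mf{C}}$, $\Lambda_{\mf{D}'}$ be the Turing operators witnessing that $F$ and $G$ are pseudo-inverses. By \cref{th:enumimplcompfunc} I get computable functors $F'$ and $G'$ together with Turing operators $\Theta$, $\Omega$ inducing an effective isomorphism between $F$ and $F'$ and one between $G$ and $G'$; recall from the construction in \cref{th:enumimplcompfunc} that $\Theta^{\A}\colon F(\A)\ra F'(\A)$ is the relabeling $b\mapsto\langle b,s\rangle$, whose inverse --- projection onto the first coordinate --- is uniformly computable from $F'(\A)$ alone. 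Since $F'(\A)\cong F(\A)\in\mf{D}'\subseteq\mf{D}$ and $\mf{D}$ is closed under isomorphism, the image $\mf{D}''=\{F'(\A)\mid\A\in\mf{C}\}$ is a subclass of $\mf{D}$, and the inverse relabelings assemble into a computable functor $R\colon\mf{D}''\ra\mf{D}'$ with $R(F'(\A))=F(\A)$. Put $G''=G'\circ R\colon\mf{D}''\ra\mf{C}$; as a composition of computable functors, $G''$ is computable. The claim is then that $F'\colon\mf{C}\ra\mf{D}''$ and $G''\colon\mf{D}''\ra\mf{C}$ are pseudo-inverses, which gives the u.c.t.~reduction.

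To verify this I would run the diagram chase of \cref{th:enumbiimplcompbi} with $R$ inserted where needed. Writing $r^{\p\B}\colon R(\p\B)\ra\p\B$ for the relabeling of \cref{th:enumimplcompfunc} (so that $r^{F'(\A)}=\Theta^{\A}$), I set
\[
  \Gamma_{\mf{C}}^{\A}=\Omega^{F(\A)}\circ\Lambda_{\mf{C}}^{\A}\colon\A\ra G''(F'(\A))
\]
and, for $\p\B\in\mf{D}''$,
\[
  \Gamma_{\mf{D}''}^{\p\B}=F'(\Omega^{R(\p\B)})\circ\Theta^{G(R(\p\B))}\circ\Lambda_{\mf{D}'}^{R(\p\B)}\circ(r^{\p\B})^{-1}\colon\p\B\ra F'(G''(\p\B)).
\]
One checks that $\Gamma_{\mf{C}}$ and $\Gamma_{\mf{D}''}$ are uniformly computable in $\A$, respectively $\p\B$ --- the only delicate point being that applying the computable functor $F'$ to the isomorphism $\Omega^{R(\p\B)}$ needs its domain $G(R(\p\B))$ as an oracle, which is uniformly computable from $\p\B$ for the same reason that the analogous object is uniformly computable in the proof of \cref{th:enumbiimplcompbi} (here using $R(\p\B)\leq_T\p\B$) --- and that the naturality squares commute, so that $\Gamma_{\mf{C}}$ and $\Gamma_{\mf{D}''}$ witness $G''\circ F'\cong id_{\mf{C}}$ and $F'\circ G''\cong id_{\mf{D}''}$. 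Finally, after substituting $R(F'(\A))=F(\A)$ and $r^{F'(\A)}=\Theta^{\A}$, the coherence identity $\Gamma_{\mf{D}''}^{F'(\A)}=F'(\Gamma_{\mf{C}}^{\A})$ reduces to
\[
  \Theta^{G(F(\A))}\circ\Lambda_{\mf{D}'}^{F(\A)}=F'(\Lambda_{\mf{C}}^{\A})\circ\Theta^{\A},
\]
which follows from the naturality of $\Theta$ and the pseudo-inverse identity $\Lambda_{\mf{D}'}^{F(\A)}=F(\Lambda_{\mf{C}}^{\A})$; the dual identity $\Gamma_{\mf{C}}^{G''(\p\B)}=G''(\Gamma_{\mf{D}''}^{\p\B})$ is obtained the same way from the naturality of $\Omega$ and the other pseudo-inverse identity.

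I expect the main obstacle to be organizational rather than conceptual: one must thread the relabeling functor $R$ and the isomorphisms $r^{\p\B}$ through the definitions of $\Gamma_{\mf{C}}$ and $\Gamma_{\mf{D}''}$ consistently, since it is precisely the two coherence identities above --- the part that occupies most of the proof of \cref{th:enumbiimplcompbi} --- that are sensitive to getting this bookkeeping right. With $R$ fixed, everything else is a direct transcription of that proof.
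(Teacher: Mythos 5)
Your proposal is correct and follows exactly the route the paper indicates: the paper gives no written proof of this corollary beyond the remark that one should adapt the argument of \cref{th:enumbiimplcompbi}, and your diagram chase is precisely that adaptation, with the coherence identity correctly reduced to the naturality of $\Theta$ together with $\Lambda_{\mf{D}'}^{F(\A)}=F(\Lambda_{\mf{C}}^{\A})$. Your extra bookkeeping with the image class $\mf{D}''$ and the relabeling functor $R$ addresses a point the paper silently glosses over (that $F'$ need not land in $\mf{D}'$ itself, only in an isomorphic subclass of $\mf{D}$), and is a legitimate way to make the adaptation precise.
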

\section{Enumerable Functors and Effective Interpretability}\label{sec:effectiveint}
        The main goal of this section is to prove the following theorem, exhibiting an equivalence between enumerable functors and a restricted version of effective interpetability.
        \begin{theorem}\label{th:equivalenceintfunc}
					A structure $\A$ is effectively interpretable in $\B$ with $\sim$ computable if and only if there is an enumerable functor $F:Iso(\B) \ra Iso(\A)$.
				\end{theorem}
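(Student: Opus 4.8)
The plan is to adapt the equivalence between computable functors and effective interpretability of Harrison-Trainor, Melnikov, R.\ Miller, and Montalb\'an~\cite{harrison-trainor_computable_2017}, paying attention to the places where an enumeration operator, rather than a Turing operator, is in play; these are exactly the places that force the domain and the interpreting relations to be only enumerable while leaving the equivalence relation $\sim$ computable. I take ``$\A$ effectively interpretable in $\B$ with $\sim$ computable'' to mean: there are a domain $\mathrm{Dom}_\B^\A\subseteq B^{<\omega}$ and interpreting relations whose positive and negative parts are uniformly $\Sicom{1}$ in $\B$, together with an equivalence relation $\sim$ on $\mathrm{Dom}_\B^\A$ that is uniformly relatively intrinsically computable (equivalently $\Decom{1}$), such that the induced quotient structure is isomorphic to $\A$.

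\emph{From an interpretation to an enumerable functor.} Given such an interpretation and any $\p\B\in Iso(\B)$, observe that from a mere enumeration of $D(\p\B)$ one can enumerate $\mathrm{Dom}_{\p\B}^\A$ and each interpreting relation together with its complement (their definitions are existential, and negated atoms are positive information about $D(\p\B)$), and one can enumerate both $\sim$ and its complement, hence eventually decide $\sim$ on any given pair of tuples. Use this to produce $F(\p\B)$ on universe $\omega$: process the tuples of $\mathrm{Dom}_{\p\B}^\A$ in the order they appear; a newly appearing tuple is assigned the index of the first previously processed representative that is $\sim$-equivalent to it, if there is one, and otherwise the least unused natural number, becoming itself a representative; whenever an interpreting relation or its complement is seen to hold of a tuple whose entries already carry indices, emit the corresponding atomic or negated atomic fact of $D(F(\p\B))$. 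This describes an enumeration operator $\Psi$ with $\Psi^{\p\B}=F(\p\B)\cong\A$; here one must check that the emitted set depends only on the set $D(\p\B)$ and is a well-formed atomic diagram on $\omega$. For a morphism $f:\p\B\ra\hat\B$ the induced action on tuples preserves $\mathrm{Dom}$ and $\sim$, hence descends to an isomorphism $F(\p\B)\ra F(\hat\B)$ that is computable from $\p\B\oplus f\oplus\hat\B$, giving the Turing operator $\Phi_*$; functoriality and the commuting squares are then routine.

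\emph{From an enumerable functor to an interpretation.} Let $F:Iso(\B)\ra Iso(\A)$ be enumerable, witnessed by $(\Psi,\Phi_*)$. For a tuple $\bar b=(b_0,\dots,b_{n-1})$ of distinct elements of $B$, let $\B_{\bar b}$ be the copy of $\B$ with universe $\omega$ sending $i\mapsto b_i$ for $i<n$ and then listing the remaining elements of $B$ in increasing order; $\B_{\bar b}$ and the canonical relabeling isomorphism $\B_{\bar b}\ra\B_{\bar c}$ are computable from $\B$, uniformly in $\bar b,\bar c$. Take as domain the pairs $(\bar b,k)$ with such a $\bar b$ and $k$ appearing in $\Psi^{\B_{\bar b}}$, declare an interpreting relation, resp.\ its complement, to hold of a tuple of such pairs precisely when the matching atomic, resp.\ negated atomic, fact is enumerated in $\Psi^{\B_{\bar b}}$ for a common $\bar b$ (transporting the $k$'s along $\sim$ when the first coordinates differ), and set $(\bar b,k)\sim(\bar c,l)$ exactly when the isomorphism that $\Phi_*$ computes from $\B_{\bar b}$, the canonical relabeling $\B_{\bar b}\ra\B_{\bar c}$, and $\B_{\bar c}$ sends $k$ to $l$. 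The domain and the interpreting relations and their complements are $\Sicom{1}$, since they ask only that $\Psi$ enumerate something from finitely much positive information; the essential point is that $\sim$ is \emph{computable}: from a copy of $\B$ one computes $\B_{\bar b}$, $\B_{\bar c}$ and the relabeling between them, runs the Turing operator $\Phi_*$, and waits for the resulting isomorphism---a total bijection---to converge on input $k$, thereby deciding whether its value is $l$, and this is uniform in the copy. One then checks, using that $F$ is a functor (so $\Phi_*$ is coherent on identities and on the relabelings) and that every element of every $F(\B_{\bar b})$ is $\sim$-identified via $\Phi_*$ with an element of $F(\B)$, that $\sim$ is an equivalence relation and that $\mathrm{Dom}_\B^\A/{\sim}$ with the interpreting relations is isomorphic to $F(\B)\cong\A$.

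\emph{Main obstacle.} As in~\cite{harrison-trainor_computable_2017}, the delicate direction is the second, and within it the heart of the argument is verifying that the interpreted equality $\sim$ is genuinely (uniformly relatively intrinsically) computable rather than merely enumerable; this is exactly where the morphism operator $\Phi_*$ and the canonical relabeling isomorphisms are needed, and it is why the matching syntactic notion is the one with $\sim$ computable. The remaining work is bookkeeping that is nonetheless easy to get wrong: in the first direction, turning the representative-assignment into a bona fide enumeration operator---a function of the oracle set, not of an enumeration of it---that outputs a legitimate atomic diagram on $\omega$; in the second, promoting the coherence of $\Psi$ and $\Phi_*$ on the canonical copies to an honest equivalence relation whose quotient realizes $\A$, and dealing with tuples with repetitions and with finite $\A$ according to the paper's conventions.
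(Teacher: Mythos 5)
Your forward direction, as written, does not produce an enumeration operator. The rule ``a newly appearing tuple is assigned the index of the first previously processed representative that is $\sim$-equivalent to it, otherwise the least unused natural number'' depends on the \emph{order} in which tuples of the domain show up, i.e.\ on an enumeration of $D(\p\B)$ rather than on the set $D(\p\B)$. An enumeration operator is a c.e.\ set of pairs $(\alpha,b)$ whose output is the union over \emph{all} finite $\alpha\subseteq D(\p\B)$, so the conflicting assignments arising from different finite fragments would all be emitted and the result would not be a coherent atomic diagram. You flag this yourself as something ``one must check,'' but it is precisely the point at which the hypothesis that $\sim$ is computable must be used, and your construction fails without the missing idea. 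The paper's fix is to choose representatives canonically rather than greedily: fix a computable bijection $\sigma:\omega\to\omega^{<\omega}$ and set $h(\ol y)=\mu x\,[\sigma(x)\sim\ol y]$. Because $\sim$ is computable (as a relation on codes, independent of the particular copy), $h$ is a total computable function, so the pairs $(\alpha,\ x=x)$ and $(\alpha,\ P_i(h(\ol y_1),\dots,h(\ol y_{p_i})))$ with $\alpha\models\phi_{\eidomain{A}{B}}$, resp.\ $\alpha\models\phi_{R_i}$, can be enumerated once and for all, and the output is automatically a function of the oracle set.

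In the converse direction you port the Harrison-Trainor--Melnikov--Miller--Montalb\'an construction for computable functors (reordered copies $\B_{\bar b}$, with $\sim$ decided by running $\Phi_*$ on the canonical relabelings). That route can probably be pushed through, but the paper does something substantially simpler which your sketch misses: by substructure preservation for enumeration operators (\cref{prop:substruct}), $\Psi^{\B\restrict[\ol{a}]}\subseteq\Psi^{\B}$ \emph{with the same indices}, so one may take the domain to be the pairs $(\ol a,i)$ with $i=i\in\Psi^{\B\restrict[\ol{a}]}$ and declare $(\ol a,i)\sim(\ol b,j)$ iff $i=j$. The equivalence relation is then literally a computable set and $\Phi_*$ plays no role in deciding it. Your claim that ``the heart of the argument'' is using $\Phi_*$ and the relabeling isomorphisms to make $\sim$ computable is therefore a misdiagnosis relative to the paper: the computability of $\sim$ falls out of the monotonicity of $\Psi$, which is exactly the feature distinguishing enumeration operators from Turing operators. (The paper also needs a final padding step converting the $\Sicom{1}$-definable domain into a $\Decom{1}$-definable one; your sketch omits this, though it is routine.)
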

        We prove \cref{th:equivalenceintfunc} constructively and furthermore show that given a functor $F$, the functor $I^F$ obtained by using the procedures we give in the proof is effectively isomorphic to $F$.
        \begin{proposition}\label{prop:enumeffisoif}
		   		Let $F:Iso(\B) \ra Iso(\A)$ be an enumerable functor. Then $F$ and $I^F$ are effectively isomorphic.
		   	\end{proposition}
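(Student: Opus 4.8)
The construction of $I^F$ carried out in the proof of \cref{th:equivalenceintfunc} will arrange, for each copy $\hat\B\in Iso(\B)$, that the universe of $I^F(\hat\B)$ consists of $\sim$-classes of tuples from $\hat\B$, a tuple being admitted exactly when it ``names'', via the enumeration operator $\Psi$ of $F$, an element of $F(\hat\B)=\Psi^{\hat\B}$, and that a relation $R_i$ holds in $I^F(\hat\B)$ of some classes precisely when the matching atomic formula about the named elements is enumerated into $\Psi^{\hat\B}$. Accordingly, I would exhibit the effective isomorphism between $F$ and $I^F$ as the ``decoding'' functional that sends an element of $F(\hat\B)$ back to the $\sim$-class of a tuple naming it, in the same spirit as the recoding $\theta^{\A}$ appearing in the proof of \cref{th:enumimplcompfunc}.

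Concretely, for $\hat\B\in Iso(\B)$ I would define $\Lambda^{\hat\B}(a)$, for $a\in F(\hat\B)$, by enumerating $\Psi^{\hat\B}$ until the first pair $(\alpha,\,a=a)\in\Psi$ with $\alpha\subseteq D(\hat\B)$ is found --- this search terminates since $a\in F(\hat\B)=\Psi^{\hat\B}$ and $F(\hat\B)$ is an atomic diagram --- then reading off the tuple $\bar b$ of elements of $\hat\B$ that occur in $\alpha$ and outputting its $\sim$-class. This procedure is uniform in the oracle $\hat\B$, so a single Turing functional $\Lambda$ computes $\Lambda^{\hat\B}$ for all $\hat\B$.

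Next I would verify that each $\Lambda^{\hat\B}$ is an isomorphism $F(\hat\B)\to I^F(\hat\B)$. It is well defined and injective because, by the definition of $\sim$ in the construction, two tuples fall in the same $\sim$-class exactly when they name the same element of $\Psi^{\hat\B}$; it is surjective because every element of the universe of $I^F(\hat\B)$ is the $\sim$-class of some naming tuple, that tuple names a unique $a\in F(\hat\B)$, and $\Lambda^{\hat\B}(a)$ is then that very class. For the relations: for each $R_i$ and each tuple $\bar a$ over $F(\hat\B)$ one has $R_i^{F(\hat\B)}(\bar a)$ iff $R_i(\bar a)\in\Psi^{\hat\B}$, since $\Psi^{\hat\B}$ is a complete atomic diagram, and by the construction of $I^F$ the latter holds iff $R_i^{I^F(\hat\B)}$ holds of the classes $\Lambda^{\hat\B}(\bar a)$; so $\Lambda^{\hat\B}$ preserves and reflects every relation.

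The one substantial step, which I expect to be the main obstacle, is naturality: for every isomorphism $h:\hat\B_1\to\hat\B_2$ in $Iso(\B)$ the square with edges $F(h)$, $I^F(h)$, $\Lambda^{\hat\B_1}$, $\Lambda^{\hat\B_2}$ must commute. Since $I^F(h)$ sends a $\sim$-class to the class of the $h$-image of a representative tuple, commutativity reduces to the assertion that if $\bar b$ names $a$ in $\hat\B_1$ then $h(\bar b)$ names $F(h)(a)$ in $\hat\B_2$. This is exactly the compatibility between the operators $\Psi$ and $\Phi_*$ of $F$ that has to be established in the proof of \cref{th:equivalenceintfunc} anyway in order for $I^F$ to be a genuine functor $Iso(\B)\to Iso(\A)$ with $I^F(\hat\B)\cong F(\hat\B)$; it will follow from the functoriality of $F$ together with the uniformity of $\Phi_*$ on $\hat\B_1\oplus h\oplus\hat\B_2$, and getting this coherence right is where the care is needed, the rest being bookkeeping. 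With it in place the square commutes, and because $\Lambda$ is a single Turing functional computing every $\Lambda^{\hat\B}$ from $\hat\B$, it witnesses that $F$ and $I^F$ are effectively isomorphic.
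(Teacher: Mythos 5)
Your proposal follows essentially the same route as the paper: the witnessing functional is the uniformly computable map sending $a\in F(\hat\B)$ back through the interpretation quotient (the inverse of the paper's projection $\zeta_{\hat\B}$) and out via the coding map $\xi_{\hat\B}$ of \cref{prop:intimplenum} into $I^F(\hat\B)$, with naturality deferred to the coherence of the two constructions, exactly as in the paper's diagram argument. The only point to tidy is that $\Lambda^{\hat\B}$ must output the canonical numerical code of the $\sim$-class (the map $h$ from the proof of \cref{prop:intimplenum}) rather than the class itself, since the universe of $I^F(\hat\B)$ is a subset of $\omega$.
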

        We also prove statements analogous to \cref{th:equivalenceintfunc} for enumerable bi-transformability and effective bi-interpretability, and reducibility by uniform enumerable transformations and reducibility via effective bi-interpretability.

        The authors of~\cite{harrison-trainor_computable_2017} proved similar results for computable functors and effective interpretability.

      	Before we give the proofs we recall some definitions.
				\subsection{Effective Interpretability}
        \begin{definition}
           A relation $R$ is \emph{uniformly intrinsically computable, short u.r.i.\ computable,} in $\A$ if there is Turing operator $\Phi$ such that $\Phi^{\p\A}=R^{\p\A}$ for any $\p\A\in Iso(\A)$.

           \noindent A relation $R$ is \emph{uniformly intrinsically computably enumerable, short u.r.i.c.e.,} in $\A$ if there is a Turing operator $\Phi$ such that $R^{\p\A} = \rng(\Phi^{\p\A})$ for any $\p\A\in Iso(\A)$.
        \end{definition}
        We say that a relation is $\Sicom{1}$-definable in the language $L$ if it is definable by a $\Sigma_1$ computable infinitary formula without parameters in $L$. A relation is $\Decom{1}$-definable if it and its corelation are definable by a $\Sicom{1}$ formula.
        That a relation $R$ is $\Sicom{1}$-definable ($\Decom{1}$-definable) in a structure $\A$ is strongly connected to it being  u.r.i.c.e.\ (u.r.i.\ computable).
        Ash, Knight, and Slaman~\cite{ash_relatively_1993}, building on work by Ash, Knight, Manasse, and Slaman~\cite{ash_generic_1989} and Chisholm~\cite{chisholm_effective_1990}, proved that a relation $R$ is u.r.i.c.e.\ (u.r.i.\ computable) in $\A$ iff it is $\Sicom{1}$-definable ($\Decom{1}$-definable) in $\A$.

				In~\cite{montalban_rice_2012} Montalb\'an studied the algorithmic complexity of sequences of relations. Following his definition, a sequence of relations $(R_i)_{i\in \omega}$ is u.r.i.\  computable in $\A$ if the set $\oplus_{i\in \omega} R_i$ is u.r.i.\ computable in $\A$. By the work of Ash, Knight, and Slaman this is the case iff $\oplus_{i\in \omega} R_i$ is $\Decom{1}$-definable. Thus, a sequence of relations $(R_i)_{i\in \omega}$ is $\Decom{1}$-definable if $\oplus_{i\in \omega} R_i$ is.
				\begin{definition}\label{definition:effectiveinterp}
					A structure $\mc{A}=(A, P_0^\mc{A},P_1^\mc{A},\cdots)$ is \textit{effectively interpretable} in $\mc{B}$ if there exists a $\Decom{1}$-definable sequence of relations (in $\mc{B}$) $(\eidomain{A}{B}, \sim,R_0,R_1,\cdots)$ such that
					\begin{thenum}
						\item $\eidomain{A}{B} \subseteq B^{<\omega}$,
						\item $\sim$ is an equivalence relation on $\eidomain{A}{B}$,
						\item $R_i \subseteq {(B^{<\omega})}^{a_{R_i}}$ is closed under $\sim$ within $\eidomain{A}{B}$,
					\end{thenum}
					and there exists a function $f^\mc{B}_\mc{A}:\eidomain{A}{B} \ra \mc{A}$, the \textit{effective interpretation} of $\mc{A}$ in $\mc{B}$, which induces an isomorphism:
					\[(\eidomain{A}{B},R_0,R_1,\cdots)/_\sim \cong (A,P_0^\mc{A},P_1^\mc{A},\cdots)\]
				\end{definition}
        We use the definition from~\cite{harrison-trainor_computable_2017}, in the literature~\cite{montalban_rice_2012,montalban_computability_2014}, effective interpretability is sometimes defined differently with $\eidomain{A}{B}$ required to be $\Sicom{1}$-definable instead of $\Decom{1}$-definable. These two definitions are equivalent; in our proof of \cref{prop:enumimplint} we demonstrate how to transform an effective interpretation where $\eidomain{A}{B}$ is $\Sicom{1}$-definable into one where it is $\Decom{1}$-definable.

        In \cref{th:enumimplcompfunc} we do not only use effective interpretability but we also require the equivalence relation in the definition to be computable. The following proposition shows that this is justified.
        \begin{proposition}
          Let $(R_i)_{i\in \omega}$ be $\Decom{1}$-definable in $\A$ and let $X$ be a computable set. Then $(X,R_1,R_2,\dots)$ is $\Decom{1}$-definable in $\A$.
        \end{proposition}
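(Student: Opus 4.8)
The plan is to pass through the characterization of $\Decom{1}$-definability by uniform relative intrinsic computability recalled just above. A sequence of relations $(S_i)_{i\in\omega}$ is, by definition, $\Decom{1}$-definable in $\A$ exactly when $\bigoplus_{i\in\omega} S_i$ is $\Decom{1}$-definable in $\A$, and by the theorem of Ash, Knight, and Slaman this holds exactly when $\bigoplus_{i\in\omega} S_i$ is u.r.i.\ computable in $\A$. So it suffices to prove the following reformulation: if $\bigoplus_{i\in\omega} R_i$ is u.r.i.\ computable in $\A$, then the join $X \oplus \bigoplus_{i\geq 1} R_i$ of the sequence $(X, R_1, R_2, \dots)$ is u.r.i.\ computable in $\A$ as well.

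To do this I would fix a Turing operator $\Phi$ witnessing that $\bigoplus_{i\in\omega} R_i$ is u.r.i.\ computable in $\A$, so that $\Phi^{\p\A} = \bigl(\bigoplus_{i\in\omega} R_i\bigr)^{\p\A}$ for every $\p\A \in Iso(\A)$, together with a decision procedure for the computable set $X$. I would then describe a single Turing operator $\Phi'$ as follows: on oracle $\p\A$ and a query about membership in the $0$-th column of the join of $(X, R_1, R_2, \dots)$, $\Phi'$ answers by running the decision procedure for $X$, consulting no oracle at all; on a query about the $j$-th column with $j \geq 1$, $\Phi'$ relays the corresponding query to $\Phi^{\p\A}$, whose $j$-th column is $R_j^{\p\A}$. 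By construction $\Phi'^{\p\A}$ is precisely the join of $(X, R_1, R_2, \dots)$ evaluated in $\p\A$, and $\Phi'$ does not depend on $\p\A$; hence $(X, R_1, R_2, \dots)$ is u.r.i.\ computable, and therefore $\Decom{1}$-definable, in $\A$.

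I do not expect a genuine obstacle. The only points that require attention are the routine bookkeeping with the coding of the joins $\bigoplus_i S_i$ — arranging matters so that passing from $(R_i)_{i\in\omega}$ to $(X, R_1, R_2, \dots)$ amounts exactly to overwriting the $0$-th column — and the one substantive observation that drives everything: since $X$ is computable it is decided with no access to the oracle, so it may be spliced into an intrinsically computable sequence for free. This is exactly what justifies requiring, in \cref{th:equivalenceintfunc} and the constructions that follow it, that the relevant relation $\sim$ of the effective interpretation be computable rather than merely $\Decom{1}$-definable.
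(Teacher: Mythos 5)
Your proposal is correct and is essentially the paper's own argument: both pass through the Ash--Knight--Slaman equivalence between $\Decom{1}$-definability and u.r.i.\ computability of the join, and both build a new Turing operator that answers queries about the $X$-column by running the oracle-free decision procedure for $X$ and relays all other columns to the operator witnessing that $(R_i)_{i\in\omega}$ is u.r.i.\ computable. The only difference is cosmetic bookkeeping in how the columns of the join are re-indexed.
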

        \begin{proof}
          Let $(R_i)_{i\in \omega}$ be $\Decom{1}$-definable in $\A$ and let $X$ be a computable set, say it is computed by $\phi_e$, and let $\Phi$ be the Turing operator witnessing that $(R_i)_{i\in \omega}$ is u.r.i.\ computable. Now define a new operator by
          $$ \Phi'^{S}(\langle i, x \rangle)=
          \begin{cases}
            \Phi^{S}(\langle i-1,x\rangle) & i>1\\
            \phi_e(x) & \text{otherwise}
          \end{cases}
          .$$
          Clearly, $\Phi'$ is a computable operator and witnesses that the sequence $(X, R_1,R_2,\dots)$ is u.r.i.\ computable in $\A$.
        \end{proof}
				Several possibilities to define an equivalence between structures based on effective interpretations exist.
				One is the notion of $\Sigma$-equivalence investigated in~\cite{stukachev_effective_2013}, where two structures are $\Sigma$-equivalent if they are $\Sigma$-definable in each other. We will look at a stronger notion, effective bi-interpretability, which additionally requires the composition of the interpretations to be computable in the respective structures. This was first studied by Montalb\'an~\cite{montalban_computability_2014}.
				\begin{definition}\label{definition:effectivebiint}
					Two structures $\mc{A}$ and $\mc{B}$ are \textit{effectively bi-interpretable} if there are effective interpretations of one in the other such that the compositions
					\[ f_\B^\A\circ \p f^\mc{B}_\mc{A}: \mc{D}om_\mc{B}^{\mc{D}om^\mc{B}_\mc{A}}\ra \mc{B} \quad \mbox{and} \quad f_\mc{A}^\mc{B} \circ \p f_\mc{B}^\mc{A}: \mc{D}om_\mc{A}^{\mc{D}om^\mc{A}_\mc{B}} \ra \mc{A} \]
					are uniformly relatively intrinsically computable in $\mc{B}$ and $\mc{A}$ respectively. (Here the function $\tilde{f}^\mc{B}_\mc{A}:(\eidomain{A}{B})^{<\omega}\ra\mc{A}^{<\omega}$ is the canonic extension of $f_\mc{A}^\mc{B}: \mc{D}om^\mc{B}_\mc{A} \ra \mc{A}$ mapping $\mc{D}om^{\mc{D}om^\mc{B}_\mc{A}}_\mc{B}$ to $\mc{D}om^\mc{A}_\mc{B}$.)
				\end{definition}
				In line with the definition of $\p f^{\B}_\A$ in \cref{definition:effectivebiint}, for a function $f:\A\ra\B$, $\p f$ is the canonic extension of $f$ to tuples, i.e.,
				\[\p f: \A^{<\omega} \ra \B^{<\omega} \text{ with } \p f((x_1,\dots))= (f(x_1),\dots).\]

				As for computable and enumerable functors one gets a method of reduction of classes of structures based on effective bi-interpretability~\cite{montalban_computability_2014}.
				\begin{definition}
					A class \emph{$\mf{C}$ is reducible to $\mf{D}$ via effective bi-interpretability} if there are $\Decom{1}$-formulas such that for every $\mc{A} \in \mf{C}$, there is a $\mc{B} \in \mf{D}$ such that $\mc{A}$ and $\mc{B}$ are effectively bi-interpretable using those formulas and the formulas are independent of the choice of $\A,\B$. A class $\mf{C}$ is \emph{complete for effective bi-interpretability}, if for every computable language $L$, the class of $L$-structures is reducible to $\mf{C}$ via effective bi-interpretability.
				\end{definition}
				\subsection{Proof of \cref{th:equivalenceintfunc} and \cref{prop:enumeffisoif}}
				We prove the two directions of the equivalence in \cref{th:equivalenceintfunc} separately in \cref{prop:intimplenum} and \cref{prop:enumimplint}.
				\begin{proposition}\label{prop:intimplenum}
					If $\A$ is effectively interpretable in $\B$ with $\sim$ computable, then there is an enumerable functor $F:Iso(\B) \ra Iso(\A)$.
				\end{proposition}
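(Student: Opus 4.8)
The plan is to read the functor directly off the interpretation, just as one reads a computable functor off an effective interpretation in~\cite{harrison-trainor_computable_2017}. Fix a $\Decom{1}$-definable sequence $(\eidomain{A}{B},\sim,R_0,R_1,\dots)$ in $\B$ together with the interpretation $f^{\B}_{\A}:\eidomain{A}{B}\ra\A$ witnessing the hypothesis. Because this sequence is $\Decom{1}$-definable and $\sim$ is computable, each of $\eidomain{A}{B}$ and its complement, of $\sim$ and of $\not\sim$, and of every $R_i$ and $\neg R_i$ is $\Sicom{1}$-definable in $\B$, hence u.r.i.c.e.\ in $\B$; equivalently each is enumerable from $D(\p\B)$ uniformly in $\p\B\in Iso(\B)$. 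Fix the standard numbering of $\omega^{<\omega}$, and for $\p\B\in Iso(\B)$ let $U^{\p\B}$ be the set of tuples $\bar b\in\eidomain{A}{B}$ (interpreted in $\p\B$) that are $\le$-least in their $\sim$-class in $\p\B$; membership in $U^{\p\B}$ is again a $\Sicom{1}$ condition, being a finite conjunction of $\bar b\in\eidomain{A}{B}$ and the facts $\bar b\not\sim\bar c$ in $\p\B$ over the finitely many $\bar c<\bar b$, and $U^{\p\B}$ is a transversal for $\sim$ in $\p\B$. I would then let $F(\p\B)$ be the structure with universe $\omega$ in which $n$ names the $n$-th element of $U^{\p\B}$ and $P_i^{F(\p\B)}(n_1,\dots,n_k)$ holds iff $R_i$ holds in $\p\B$ of the tuples named by $n_1,\dots,n_k$; restricting $f^{\B}_{\A}$ to $U^{\p\B}$ and precomposing with the enumeration $\omega\ra U^{\p\B}$ gives an isomorphism $F(\p\B)\ra\A$, so $F(\p\B)\in Iso(\A)$. (If $\A$ is finite the statement is vacuous, so I may assume $\A$, hence every $U^{\p\B}$, is infinite.)

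On morphisms, for an isomorphism $g:\p\B_1\ra\p\B_2$ I would let $F(g)$ send the element of $F(\p\B_1)$ named by $\bar b\in U^{\p\B_1}$ to the element of $F(\p\B_2)$ named by the $\le$-least representative, in $\p\B_2$, of the $\sim$-class of $\p g(\bar b)$. Since $\eidomain{A}{B}$, $\sim$ and the $R_i$ are definable and hence invariant under isomorphism, $\p g$ carries the interpretation domain of $\p\B_1$ onto that of $\p\B_2$ and respects $\sim$ and the $R_i$, so $F(g)$ is a well-defined isomorphism, and $F(id)=id$ and $F(g\circ h)=F(g)\circ F(h)$ follow because passing to least representatives is idempotent and commutes with $\p g$ modulo $\sim$. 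The Turing operator $\Phi_*$ is immediate: from $\p\B_1\oplus g\oplus\p\B_2$ one uniformly decides $\eidomain{A}{B}$ and $\sim$ in both copies (they are u.r.i.\ computable), lists $U^{\p\B_1}$ and $U^{\p\B_2}$ in increasing order, applies $\p g$, passes to the least representative, and returns its index, computing $F(g)$. For the enumeration operator $\Psi$, fix an atomic or negated-atomic sentence $\phi$ of the language of $\A$, say $\phi\equiv P_i(n_1,\dots,n_k)$; then $\phi\in D(F(\p\B))$ iff there are tuples $\bar c_1,\dots,\bar c_k$ with each $\bar c_j$ the $n_j$-th element of $U^{\p\B}$ and $R_i(\bar c_1,\dots,\bar c_k)$ true in $\p\B$, and the condition that $\bar c_j$ is the $n_j$-th element of $U^{\p\B}$ is a finite conjunction of the $\Sicom{1}$ facts $\bar c_j\in U^{\p\B}$ and $\bar d\notin U^{\p\B}$ over the finitely many $\bar d<\bar c_j$; hence $\phi\in D(F(\p\B))$ is $\Sicom{1}$-definable in $\B$, uniformly in $\phi$ (the negated-atomic case uses $\neg R_i$). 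Assembling these uniform searches into a single c.e.\ set of pairs $(\alpha,\phi)$ gives an enumeration operator $\Psi$ with $\Psi^{\p\B}=D(F(\p\B))$ for every $\p\B\in Iso(\B)$, and $(\Psi,\Phi_*)$ then witnesses that $F$ is an enumerable functor.

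The point I expect to need care is checking that this $\Psi$ is genuinely a (monotone) enumeration operator, and this is exactly where the hypothesis that $\sim$ is computable is used. Since $F(\p\B)$ must have universe precisely $\omega$, the identification of $n\in\omega$ with the $n$-th element of $U^{\p\B}$, and hence every atomic fact of $F(\p\B)$, must be committed to on the strength of positive information about $\p\B$ that can never later be withdrawn; with $\sim$ computable, $\not\sim$ is $\Sicom{1}$, so the set of least representatives and its complement are $\Sicom{1}$ as well, and therefore each such commitment is certified by a finite subset of $D(\p\B)$ and remains certified, which is what keeps the search legitimate and uniform. (The same observation underlies the verification that the isomorphism type of $F(\p\B)$ and the action of $F(g)$ do not depend on the order in which facts are enumerated.) The remaining verifications — functoriality, $F(\p\B)\cong\A$ via $f^{\B}_{\A}$, and uniformity of the two operators — are routine.
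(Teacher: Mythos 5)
Your proof is correct and follows essentially the same route as the paper: both arguments read the enumeration operator $\Psi$ off the $\Sicom{1}$ definitions of the domain and relations, use the computability of $\sim$ to pick the least representative of each class (the paper via the computable function $h(\ol y)=\mu x[\sigma(x)\sim\ol y]$, you via the transversal $U^{\p\B}$), and define $F(g)$ by transporting along $\p g$ and renormalizing to least representatives. The only substantive difference is that you re-index the transversal in increasing order so that $F(\p\B)$ has universe exactly $\omega$ (which additionally uses the $\Sicom{1}$-definability of the complement of $\eidomain{A}{B}$); just make sure the certificate for $\bar b\in U^{\p\B}$ also covers tuples $\bar c<\bar b$ with $\bar c\sim\bar b$ but $\bar c\notin\eidomain{A}{B}$, a condition that remains $\Sicom{1}$ because the domain is $\Decom{1}$-definable.
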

				\begin{proof}
					Let $\mc{A}$ be effectively interpretable in $\mc{B}$ and $\sim$ computable using the same notation as in \cref{definition:effectiveinterp}. We will construct $F$ by giving two witnesses $(\Psi,\Phi_*)$ for it.

					By assumption the languages $L_\mc{A}$ and $L_\mc{B}$ are computable. Hence the set $\mc{D}_{L_\mc{B}}\subseteq \omega^{<\omega}$ of all possible finite atomic diagrams in $L_\mc{B}$ is computable.
					For $(\eidomain{A}{B},R_1,\dots)$, let their defining $\Sicom 1$ formulas be $(\phi_{\eidomain{A}{B}}, \phi_{\neg \eidomain{A}{B}},\phi_{R_1},\phi_{\neg R_1},\dots)$; notice that we also use $\phi_{\neg R_i}$, the defining formula of the complement of $R_i$.
					Fix a computable bijection $\sigma: \omega \ra \omega^{<\omega}$ and define the function $h:\omega^{<\omega} \ra \omega$ by
					\begin{equation} \tag{1}\label{eq:defh} h(\ol{y}) = \mu x [ \sigma(x) \sim \ol{y}] = \mu x\leq \sigma^{-1}(\ol y) [\sigma(x) \sim \ol y ] .\end{equation}
					Intuitively $h$ maps any tuple $\ol{y}$ to a fixed presentation of it under $\sim$. We use the minimal presentation in the order induced by $\sigma$ to make $h$ computable.
					We now build $\Psi$ using $h$ in the following way.
					\begin{equation}\tag{2}\label{eq:psidef1}
						(\alpha,  x =x ) \in \Psi \LR \exists \ol{y}\ x=h(\ol y) \land \alpha \in \mc{D}_{L_\mc{B}} \land \alpha \models \phi_{\eidomain{A}{B}}(\ol y)
					\end{equation}
					Let $p_i$ be the arity of the relation $P_i$, then
					\begin{align}\tag{3}\label{eq:psidef2}
						(\alpha,  P_i(x_1,\dots,x_{p_i}) )\in \Psi      & \LR \exists \ol{y}_1,\dots, \ol{y}_{p_i} \left( \bigwedge_{i\in \{1\dots p_i\}} x_i=h(\ol y_i)\right) \land \alpha \in \mc{D}_{L_\mc{B}}\land \alpha \models \phi_{R_i}(\ol y_1 ,\dots, \ol{y}_{p_i})       \\ 
						\tag{4}\label{eq:psidef3}
						(\alpha,  \neg P_i(x_1,\dots,x_{p_i}) )\in \Psi & \LR \exists \ol{y}_1,\dots, \ol{y}_{p_i}\left( \bigwedge_{i\in \{1\dots p_i\}} x_i=h(\ol y_i)\right) \land \alpha \in \mc{D}_{L_\mc{B}} \land \alpha \models \phi_{\neg R_i}(\ol y_1 ,\dots, \ol{y}_{p_i})
					\end{align}
					Notice that the problem of deciding whether a finite structure in a computable language is a model of a $\Sicom{1}$-formula is c.e. It follows that $\Psi$ defined by \cref{eq:psidef1,eq:psidef2,eq:psidef3} is c.e.

					We now show that for $\presentation{B} \in Iso(\mc{B})$, its image $\Psi^{\p\B}$ is in the isomorphism class of $\A$.
					Define $(\eidomain{A}{\p{B}},\sim, R_1^{\p\B},\dots)$ in the obvious way using the formulas of the effective interpretation of $\A$ in $\B$.
          Say $g$ is an isomorphism from $\B$ to $\p\B$,
					then $(\eidomain{A}{B}, R_1^\B,\dots)/_\sim\cong_{\p g}(\eidomain{A}{\p B}, R_1^{\p\B},\dots)/_\sim$.
					Recall the function $h$ used in the definition of $\Psi$. Let the function $\xi_{\p\B}: (\eidomain{A}{\p B}, R_1^{\p\B},\dots)/_\sim \ra \Psi^{\p\B }$ be the canonical restriction of $h$ to the quotient of the domain, i.e., $\xi_{\p\B}({[\ol{y}]}_\sim)=h(\ol{y})$. It follows from \cref{eq:defh} that $\xi_{\p\B}$ is well defined. We will show that $\xi_{\p\B}$ is an isomorphism.

					\begin{itemize}
						\item $\xi_{\p\B}$ is $1-1$ because by \cref{eq:defh} if $h(\ol y)=h(\ol x)$ then $\ol x \sim \ol y$. Hence, $[\ol y]_\sim =[\ol x]_\sim$.
            \item $\xi_{\p\B}$ is onto because by \cref{eq:psidef1} if $x\in \Psi^{\p\B }$, then $\exists \ol{y} \in \eidomain{A}{\p\B}$ such that  $x=h(\ol{y})$.
					\end{itemize}
					It follows that $\xi_{\p\B}$ is bijective. By \cref{eq:psidef2,eq:psidef3} $\xi_{\p\B}$ is an homomorphism and therefore by the above arguments also an isomorphism. Hence, $\Psi^{\p\B }\in Iso(\A)$ as $\xi_{\p\B}\circ  \p g \circ (f_{\A}^{\B})^{-1}$ is an isomorphism from $\A$ to $\Psi^{\p\B}$.
          Notice that $\xi_{\p\B}$ is computable from $\p\B $ and that the computation is uniform.

					We now build $\Phi_*$. Assume $\p\B\cong_f \hat\B$; we use the extension of $f:\omega \ra \omega$, $\p f :\omega^{<\omega}\ra\omega^{<\omega}$ and set $F(f)=\xi_{\hat\B}\circ \p f\circ \xi_{\p\B}^{-1}$. Because $\xi_{\hat\B}$, and $\xi_{\p\B}^{-1}$ are uniformly computable in $Iso(\B)$ and $\p f$ is uniformly computable from $f$, there is a Turing operator $\Phi_*$ such that $\Phi_*^{\p\B \oplus f\oplus \hat\B}=F(f)$. Furthermore, $F(f)$ is a bijection because so are the functions it is composed of. Moreover, $F(\presentation{B})\cong_{\xi_{\hat\B }\circ \p f\circ  \xi_{\p\B}^{-1}} F(\hat\B)$ because for $Q\in (\eidomain{A}{\mc{B}},R_1,\neg R_1,\dots )$, $\presentation{B} \models \phi_{Q}(\overline{x})$ if and only if $\hat\B \models \phi_{Q}(\p f(\overline{x}))$.
				\end{proof}

				\begin{proposition}\label{prop:enumimplint}

					If there is an enumerable functor $F:Iso(\mc{B}) \ra Iso(\mc{A})$, then $\mc{A}$ is effectively interpretable in $\mc{B}$ with the restriction that $\sim$ is computable.
				\end{proposition}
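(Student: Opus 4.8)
The plan is to reverse the construction of \cref{prop:intimplenum}. Let $F$ be witnessed by $(\Psi,\Phi_{*})$. From $\Psi$ we will read off an effective interpretation of $\A$ in $\B$ in which the equivalence relation is computable; along the way the domain is first produced as a $\Sicom{1}$-definable relation and then brought down to $\Decom{1}$ by absorbing existential witnesses into the tuples.

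First I would fix some notation: for a tuple $\ol b$ from a copy $\p\B$ let $\tau_{\ol b}$ denote the \emph{atomic type} of $\ol b$, i.e.\ the atomic diagram of $\ol b$ in $\p\B$ with the $i$-th entry renamed to $i$. This is a finite object, it is uniformly computable from $\p\B$, and -- unlike $\Psi$ evaluated on an actual diagram -- it transforms correctly under isomorphisms, which is what will keep the relations of the interpretation isomorphism invariant. The crucial point is that $\Psi$ is an \emph{enumeration} operator: the set $\Psi^{[\tau_{\ol b}]}=\{m : (\exists\gamma\subseteq\tau_{\ol b})\,(\gamma,m)\in\Psi\}$ is c.e.\ uniformly in $\tau_{\ol b}$, and it is contained in the atomic diagram of $F$ of the $\ol b$-standardized copy $\p\B_{\ol b}$ of $\p\B$; so a finite amount of information about $\p\B$ already commits certain elements to the interpreted structure. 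Domain elements will be tuples $\ol b{}^\frown\ol d$ where $\ol b$ is an injective witness tuple and $\ol d$ is a block of repetitions of the last entry of $\ol b$; the length of $\ol d$ indexes among the finitely many universe elements that $\Psi$ puts out on input $\tau_{\ol b}$, and the condition ``$\ol b{}^\frown\ol d\in\eidomain{A}{B}$'' asserts that $\Psi^{[\tau_{\ol b}]}$ contains at least $\lvert\ol d\rvert+1$ of them. Computing $\tau_{\ol b}$ is uniform in the oracle and $\Psi$ is c.e., so this is a $\Sicom{1}$ condition, and $\ol b$ is recoverable from $\ol b{}^\frown\ol d$ combinatorially because $\ol b$ is injective.

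Next I would define $\sim$ by declaring $\ol b{}^\frown\ol d\sim\ol b'{}^\frown\ol d'$ to hold exactly when the two tuples code the same element of $\A$ under the coherence isomorphisms provided by $\Phi_{*}$; since $\Psi$ is finitary, whether this holds depends only on the finite data $\tau_{\ol b},\tau_{\ol b'}$ and the indices $\lvert\ol d\rvert,\lvert\ol d'\rvert$ and can be arranged so as to be decidable without consulting $\p\B$ as an oracle -- this is precisely where being enumerable, rather than merely computable, pays off and yields a genuinely computable $\sim$. The relations $R_i$ are obtained by transporting $P_i^{F(\p\B_{\ol b})}$ along the indexing; using $\Phi_{*}$ one checks that each $R_i$ is closed under $\sim$ and $\Decom{1}$-definable, and that the coding map induces an isomorphism from $(\eidomain{A}{B},R_0,R_1,\dots)/_\sim$ onto $\A$. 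Finally, to replace the $\Sicom{1}$ domain by a $\Decom{1}$ one, I would absorb into each domain tuple the finite piece of diagram witnessing the leading existential quantifier of $\phi_{\eidomain{A}{B}}$ together with the index of the relevant disjunct (again encoded by a block of repeated entries), so that membership becomes quantifier-free, hence $\Decom{1}$, and then update $\sim$ and the $R_i$ to ignore the new coordinates; since the update is purely combinatorial, $\sim$ stays computable. An entirely parallel analysis yields \cref{prop:enumeffisoif}: the functor $I^F$ attached to the interpretation just built is effectively isomorphic to $F$, the isomorphisms being assembled from $\Phi_{*}$ and the uniform computation of the $\tau_{\ol b}$'s.

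The hard part will be to reconcile three demands on the domain and on $\sim$ at once: the domain, $\sim$, and the $R_i$ must all be isomorphism invariant, which forces every bit of information extracted from $\Psi$ to be funneled through atomic types of tuples rather than through $\Psi$ applied to actual diagrams; $\sim$ must be outright computable, with no recourse to the structure; and the quotient must be isomorphic to $\A$ on the nose, not merely elementarily or computably equivalent to it. Getting the bookkeeping right -- which coordinates of a domain tuple carry the type information the $R_i$ need and which carry the purely combinatorial indexing information that $\sim$ is allowed to see -- and checking that $F$ and $I^F$ come out effectively isomorphic is the delicate part of the argument; the $\Sicom{1}$-to-$\Decom{1}$ reduction and the verifications that the $R_i$ are $\Decom{1}$ and closed under $\sim$ are then routine.
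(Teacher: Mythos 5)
Your overall architecture (read the interpretation off $\Psi$, get a $\Sicom{1}$ domain from the finitariness of the enumeration operator, then absorb existential witnesses into the tuples to reach $\Decom{1}$) matches the paper's, and the last step is carried out in the paper exactly as you sketch. But there is a genuine gap at the step you yourself identify as the crux: the computability of $\sim$. In your encoding a domain element $\ol b{}^\frown\ol d$ names an element of $F(\p\B)$ only indirectly, as ``the element indexed by $\lvert\ol d\rvert$ among what $\Psi$ enumerates from the renamed atomic type $\tau_{\ol b}$.'' Those outputs are elements of $F$ applied to a \emph{standardized} copy $\p\B_{\ol b}$, not of $F(\p\B)$ itself, and deciding whether two such tuples $\ol b{}^\frown\ol d$ and $\ol b'{}^\frown\ol d'$ name the same element of $F(\p\B)$ requires transporting along $F$ of the isomorphisms $\p\B_{\ol b}\ra\p\B\ra\p\B_{\ol b'}$, i.e.\ invoking $\Phi_*$ with the \emph{full} diagrams as oracles. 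This is global information about $\p\B$; it is not determined by the finite data $\tau_{\ol b},\tau_{\ol b'},\lvert\ol d\rvert,\lvert\ol d'\rvert$, so the assertion that $\sim$ ``can be arranged so as to be decidable without consulting $\p\B$'' is unjustified and, for this encoding, appears false. (The same alignment problem infects your $R_i$: a single atom $P_i(x_1,\dots,x_{p_i})$ mixes elements coming from different standardized copies. A secondary issue is that $\Psi^{[\tau_{\ol b}]}$ need not contribute only finitely many universe elements, so ``indexing among the finitely many elements'' needs care.)

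The paper dissolves the difficulty rather than solving it: a domain element is a pair $(\ol a,i)\in\omega^{<\omega}\times\omega$ with $(\ol a,i)\in\eidomain{A}{B}\LR i=i\in\Psi^{\B\restrict[\ol a]}$, so the \emph{actual} element $i$ of the universe of $F(\B)$ is carried as a coordinate of the tuple, and $\sim$ is simply $(\ol a,i)\sim(\ol b,j)\LR i=j$ --- literally equality of second coordinates, hence computable with no oracle and no appeal to $\Phi_*$. The relations are read off directly as $((\ol a_1,x_1),\dots)\in R_i\LR P_i(x_1,\dots,x_{p_i})\in\Psi^{\B}$, and the quotient maps onto $F(\B)$ by projection to the second coordinate. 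Note also that your premise that isomorphism invariance ``forces every bit of information through atomic types'' is not needed: a relation computed uniformly from the oracle $\p\B$ is u.r.i.c.e.\ (resp.\ u.r.i.\ computable), and by the Ash--Knight--Manasse--Slaman/Chisholm theorem that already yields $\Sicom{1}$- (resp.\ $\Decom{1}$-) definability, which is all \cref{definition:effectiveinterp} asks for. If you revise your argument to carry the element of $F(\B)$ explicitly in each domain tuple, the rest of your plan, including the $\Sicom{1}$-to-$\Decom{1}$ conversion and the effective isomorphism of $F$ with $I^F$, goes through as in the paper.
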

				\begin{proof}
					Assume $F$ is witnessed by $(\Psi,\Phi_*)$. We will first provide definitions of $\eidomain{A}{B}$ and relations $R_i$ which are $\Sicom{1}$-definable in $\B$. We then use these to build an interpretation having the desired properties, i.e., the sequence of relations is $\Decom{1}$-definable and the equivalence relation is computable. We apply the standard argument that the definition of effective interpretability which requires $\eidomain{A}{B}$ to be $\Sicom{1}$-definable is equivalent to the definition we use.

					In what follows we write $\B\restrict[A]$ for the substructure of $\B$ induced by the restriction of its universe to elements in the set $A$. Similarly $\B\restrict[\ol{a}]$ is the substructure of $\B$ induced by the restriction of its universe to elements in the tuple $\ol{a}$.

					The $\Sicom{1}$-definable interpretation made of $\eidomain{A}{B}, \sim$ and the sequence of relations $(R_1,\dots )$ are defined as follows.
					\begin{enumerate}[leftmargin=1.2cm]
						\item[$\eidomain{A}{B}$:] The domain is a subset of $\omega^{<\omega}\times \omega$ such that
						      \[ (\ol a, i) \in \eidomain{A}{B} \LR  i=i\in \Psi^{\B\upharpoonright_{\ol{a}}}.\]
						      Since $\Psi$ is c.e.~and the restriction of $\B$ to $\ol{a}$ is computable relative to $\B$, $\eidomain{A}{B}$ is uniformly r.i.c.e.~and therefore also $\Sicom{1}$-definable in $\B$.

						\item[$\sim$:] For all $(\ol a, i), (\ol b, j)\in \omega^{<\omega}\times \omega$,
						      \[ (\ol a, i) \sim (\ol b, j) \LR i=j.\]
						      By definition $\sim$ is computable, reflexive, symmetric, and transitive.
						\item[$R_i$:] Let $P_i$ have arity $p_i$. Then for all $(\ol{a}_1,x_1),\dots,(\ol{a}_{p_i},x_{p_i})\in \eidomain{A}{B}$ we define $R_i$ as follows.
						      \[ ((\ol a_1,x_1), \dots ,(\ol a_{p_i}, x_{p_i})) \in R_i  \LR P_i(x_1,\dots, x_{p_i}) \in \Psi^{\B}\]
						      \[ ((\ol a_1,x_1), \dots ,(\ol a_{p_i}, x_{p_i})) \not \in R_i  \LR \neg P_i(x_1,\dots, x_{p_i})  \in \Psi^{\B}\]
						      For $(\ol{a}_1,x_1),\dots,(\ol{a}_{p_i},x_{p_i})\in \eidomain{A}{B}$ either $P_i(x_1,\dots, x_n)$ or $ \neg P_i(x_1,\dots,x_n)$ is in $\Psi^{\B}$ and $\eidomain{A}{B}$ is $\Sicom{1}$-definable. Therefore $R_i$ is also $\Sicom{1}$-definable uniformly in $i$.
					\end{enumerate}
					Because $\sim$ is computable the restriction to the domain is trivially $\Sicom{1}$-definable. Hence, also the sequence $(\eidomain{A}{B},\sim, R_1,\dots )$ is $\Sicom{1}$-definable.
					\begin{claim}\label{claim:compatible}
						The equivalence relation $\sim$ is compatible with the definition of $R_i$, i.e., if for all $(\ol{a}_1,k_1), \ldots, (\ol{a}_{p_i},k_{p_i}), (\ol{b}_1,l_1),\ldots, (\ol{b}_{p_i},l_{p_i}) \in \eidomain{A}{B}$,  $(\ol{a}_1,k_1)\sim(\ol{b}_1,l_1),\ldots,(\ol{a}_{p_i},k_{p_i})\sim(\ol{b}_{p_i},l_{p_i})$, then $((\ol{a}_1,k_1), \ldots, (\ol{a}_{p_i},k_{p_i})) \in R_i$ iff $((\ol{b}_1,l_1),\ldots (\ol{b}_{p_i},l_{p_i}))\in R_i$.
					\end{claim}
					\begin{proof} The claim follows from the definitions of $R_i$ and $\sim$ because for $i\in \{1,\dots,p_i\}$, $(\ol{a}_i,k_i) \sim (\ol{b}_i,l_i)$ if and only $k_i$ is equal to $l_i$.				\end{proof}
					Consider the function
					$f:(\eidomain{A}{B}, R_1,\dots)/_\sim\ra F(\B)$ defined as
					$f([(\ol{a},x)]_{\sim})= x$.
					We claim that $(\eidomain{A}{B},R_1,\dots)/_\sim\cong_f F(\B)$. The function $f$ is a bijection by the definition of $\eidomain{A}{B}$ and $\sim$. It follows from the definition of $R_i$ and \cref{claim:compatible} that $f$ is an isomorphism. We defined everything needed for an effective interpretation with the exception that $(\eidomain{A}{B},\sim,R_1,\dots)$ is not $\Decom{1}$-definable.

					We now define a sequence of relations $(\eidomstar{A}{B}, \sim^*, R_1^*,\dots)$ $\Decom{1}$-definable in $\B$ such that the structure $(\eidomstar{A}{B}, R_1^*, \dots)/_{\sim^*} $ is isomorphic to $(\eidomain{A}{B}, R_1,\dots)/_\sim$.
					This sequence of relations is an effective interpretation of $\A$ in $\B$.
					\begin{enumerate}[leftmargin=1.3cm]
						\item[$\eidomstar{A}{B}$:] Since the original domain $\eidomain{A}{B}$ is $\Sicom{1}$-definable, every element $(\ol{a},i)$ satisfies a finitary existential formula $\exists \ol{y} \phi_j(\ol{a},i,\ol{y})$ in the infinite disjunction defining $\eidomain{A}{B}$ where $j$ is the index of the formula in some computable enumeration. The new domain $\eidomstar{A}{B}$ is a subset of $\omega^{<\omega}\times \omega \times \omega^{<\omega}\times \omega$ defined as follows.
						      \[ (\ol{a},i,\ol{y},j)\in \eidomstar{A}{B} \LR \B \models \phi_j(\ol{a},i,\ol{y})\]
						      $\eidomstar{A}{B}$ is clearly uniformly r.i.\ computable and thus $\Decom{1}$-definable in the language of $\B$.
						\item[$\sim^*$:]
						      For all $(\ol{a},i,\ol{y},j), (\ol{b},k,\ol{z},l)\in \omega^{<\omega}\times \omega \times \omega^{<\omega}\times \omega$
						      \[ (\ol{a},i,\ol{y},j)\sim^* (\ol{b},k,\ol{z},l)\LR \ol{a}=\ol{b} \land i=k.\]
						      This is by definition a computable equivalence relation.
						\item[$R_i^*$:] As above let $P_i$ have arity $p_i$. Then for all $(\ol{a}_1,k_1,\ol{y}_1,j_1),\dots, (\ol{a}_{p_i},k_{p_i},\ol{y}_{p_i},j_{p_i})\in \eidomstar{A}{B}$, $R_i^*$ is defined as follows.
						      \[ ((\ol{a}_1,k_1,\ol{y}_1,j_1),\dots, (\ol{a}_{p_i},k_{p_i},\ol{y}_{p_i},j_{p_i})) \in R_i^* \LR
						      	P_i(k_1,\dots, k_{p_i})  \in \Psi^{D(\B)}\]
						      	\[ ((\ol{a}_1,k_1,\ol{y}_1,j_1),\dots, (\ol{a}_{p_i},k_{p_i},\ol{y}_{p_i},j_{p_i})) \not \in R_i^* \LR \neg P_i(k_1,\dots, k_{p_i}) \in \Psi^{D(\B)}\]
						      	By the same arguments as for $R_i$, $R_i^*$ is uniformly relatively intrinsically computable from $\B$ and therefore $\Decom{1}$-definable in the language of $\B$.
						      	\end{enumerate}
						      	The sequence of relations $(\eidomstar{A}{B},\sim^*, R_1^*,\dots )$ is $\Decom{1}$-definable by an argument similar to the argument that the sequence $( \eidomain{A}{B},\sim, R_1,\dots )$ is $\Sicom{1}$-definable.
						      	\begin{claim}
						      		The equivalence relation $\sim^*$ is compatible with the definition of $R_i$.
						      	\end{claim}
						      	\begin{proof}
						      		The claim follows from an argument analogous to that given in \cref{claim:compatible}.
						      	\end{proof}
						      	Define $f^*: (\eidomstar{A}{B},R_1^*,\dots)/_{\sim^*}\ra (\eidomain{A}{B},R_1,\dots)/_\sim$ as $f^*([(\ol{a},x,\ol{y},j)]_{\sim^*})=[(\ol{a}, x)]_\sim$. It is not hard to see that $f^*$ is well defined on the quotient structure $(\eidomstar{A}{B},R_1^*,\dots)/_{\sim^*}$ and induces an isomorphism between it and $(\eidomain{A}{B},R_1,\dots)/_{\sim}$. Therefore $(\eidomstar{A}{B},R_1^*,\dots)/_{\sim^*}$ is isomorphic to $F(\B)$ by $f\circ f^*$. Since $(\eidomstar{A}{B},R_1^*,\dots)$ is $\Decom{1}$-definable and $\sim^*$ is computable, the theorem follows.
						      	\end{proof}

						      	\begin{proof}[Proof of \cref{prop:enumeffisoif}.]
						      		Let $F:\B \ra \A$ be an enumerable functor, $(\eidomain{A}{B},R_1,\dots)/_\sim$ be the effective interpretation one gets by applying the procedure described in \cref{prop:enumimplint} to $F$, and let $\zeta_\B$ be the effective interpretation, i.e., using the definition given in the proof, $\zeta_\B=f\circ f^*$. The function $\zeta_\B$ is computable relative to $\B$ using projection.

						      		We now transform the interpretation back to an enumerable functor using the procedure described in the proof of \cref{prop:intimplenum}. We get a functor $I^F: \B \ra \A$, such that $I^F(\B)\cong_{\xi_\B} (\eidomain{A}{B},R_1,\dots)/_\sim$ by the $\xi_\B$ defined in the proof of \cref{prop:intimplenum}. The following diagram shows the relation between two presentations $\p{\B},\hat{\B}\in Iso(\B)$ isomorphic by $h$ under the two functors.
						      		\begin{center}
						      			\begin{tikzpicture}
						      				\tikzset{
						      					>=stealth,
						      					auto,
						      					node distance=1.5cm,
						      					vertex/.style={circle,fill,align=center, minimum size=5pt, inner sep=0pt},
						      					every loop/.style={looseness=10}
						      				}
						      				\node (fb) at (0,0){$F(\p\B)$};
						      				\node (pb) [left =of fb]{$\p\B$};
						      				\node (p2b) [below= of pb]{$\hat\B$ };
						      				\node (fb2) [below = of fb] {$F(\hat\B)$};

						      				\node (domb2) [below = 0.75cm of p2b]{$(\eidomain{A}{\hat{B}},R_1^{\hat\B},\dots)/_\sim$};
						      				\node (domb) [above =0.75cm of pb] {$(\eidomain{A}{\p B},R_1^{\p\B},\dots)/_\sim$};
						      				\node (ib) [left= of pb] {$I^F(\p\B)$};
						      				\node (ib2) [left=of p2b] {$I^F(\hat\B)$};

						      				\path[dashed]
						      				(pb)[->] edge node {$F$} (fb)
						      				(p2b)[->] edge node{$F$} (fb2)
						      				(pb)[->] edge node [above] {$I^F$}(ib)
						      				(p2b)[->] edge node[above] {$I^F$}(ib2)
						      				;
						      				\path[]
						      				(pb)[->] edge node [left] {$h$} (p2b)
						      				(fb) [->] edge node [left] {$F(h)$} (fb2)
						      				(fb2)[<-] edge node {$\zeta_{\hat\B}$} (domb2)
						      				(ib)[->] edge node {$I^F(h)$} (ib2)
						      				(ib2) [<-] edge node [left,below=0.03cm] {$\xi_{\hat\B}$} (domb2)
						      				;
						      				\path[]
						      				(fb)[<-] edge node [right=0.1cm,above=0.02cm]{$\zeta_{\p\B}$} (domb);
						      				\path[]
						      				(ib) [<-] edge node [above]{$\xi_\p\B$} (domb);
						      			\end{tikzpicture}
						      		\end{center}
						      		By the above diagram for every presentation $\p\B$ of $\B$, $I^F(\p\B)$ and $F(\p\B)$ are isomorphic by $\xi^{-1}_{\p\B}\circ \zeta_{\p\B}$. Also the squares as given in \cref{def:effeciso} for any two presentations $\p\B,\hat\B$ can be seen to commute by the above diagram. Hence, $I^F$ and $F$ are naturally isomorphic. Since the functions $\xi_\B$ and $\zeta_\B$ are both uniformly computable in $Iso(\B)$, $I^F$ and $F$ are effectively isomorphic.
						      	\end{proof}
						      	\subsection{Enumerable bi-transformability and u.e.t.~reductions}
						      	\begin{theorem}\label{th:enumbiiffeffbi}
						      		$\mc{A}$ and $\mc{B}$ are enumerably bi-transformable iff they are effectively bi-interpretable with the restriction that the equivalence relations $\sim$ of the interpretations are computable.
						      	\end{theorem}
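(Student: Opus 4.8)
The plan is to prove the two implications separately, recycling in each direction the constructions from \cref{prop:intimplenum} and \cref{prop:enumimplint} together with \cref{prop:enumeffisoif}. Beyond the equivalence established in \cref{th:equivalenceintfunc}, one extra hypothesis is in play on each side: on the functor side, that $F$ and $G$ are pseudo-inverses in the sense of \cref{def:enumbitrans}; on the interpretation side, that the composites $f^\A_\B\circ\p f^\B_\A\colon\mc{D}om_\B^{\eidomain{A}{B}}\ra\B$ and $f^\B_\A\circ\p f^\A_\B\colon\mc{D}om_\A^{\eidomain{B}{A}}\ra\A$ are uniformly relatively intrinsically computable, as in \cref{definition:effectivebiint}. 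The theorem amounts to showing that these two conditions correspond under the translations of \cref{prop:intimplenum,prop:enumimplint}, the correspondence being mediated by the uniformly computable isomorphisms $\xi_{\p\B}$ from the proof of \cref{prop:intimplenum} and $\zeta_\B=f\circ f^*$ from the proof of \cref{prop:enumimplint}.

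\textbf{From effective bi-interpretability to enumerable bi-transformability.} Suppose $\A$ and $\B$ are effectively bi-interpretable with both equivalence relations computable, via an interpretation of $\A$ in $\B$ with data $(\eidomain{A}{B},\sim,R_0,\dots)$ and one of $\B$ in $\A$ with data $(\eidomain{B}{A},\sim,S_0,\dots)$. Applying \cref{prop:intimplenum} to these two interpretations yields enumerable functors $G\colon Iso(\B)\ra Iso(\A)$ and $F\colon Iso(\A)\ra Iso(\B)$, together with the uniformly computable isomorphisms $\xi^G_{\p\B}\colon(\eidomain{A}{\p B},R_0^{\p\B},\dots)/_\sim\ra G(\p\B)$ for $\p\B\in Iso(\B)$ and $\xi^F_{\p\A}\colon(\eidomain{B}{\p A},S_0^{\p\A},\dots)/_\sim\ra F(\p\A)$ for $\p\A\in Iso(\A)$. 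First I would show that $F\circ G$ is effectively isomorphic to $id_{Iso(\B)}$. Fix $\p\B$. Since $G$ is an enumerable functor, $G(\p\B)$---and hence the domain $\mc{D}om_\B^{G(\p\B)}$ of the interpretation of $\B$ inside the copy $G(\p\B)$ of $\A$---is uniformly computable from $\p\B$; the tuple extension of $(\xi^G_{\p\B})^{-1}$ together with the interpretation isomorphism identifies this interpretation, uniformly computably, with the quotient $\mc{D}om_\B^{\eidomain{A}{\p B}}/_\sim$ of the composite interpretation; composing $\xi^F_{G(\p\B)}$, this identification, and the inverse of the u.r.i.\ computable composite $f^\A_\B\circ\p f^\B_\A$ then produces a Turing functional $\Lambda_\B$ with $\Lambda_\B^{\p\B}\colon\p\B\ra F(G(\p\B))$, natural in $\p\B$ by the usual diagram chase. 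One builds $\Lambda_\A$ with $\Lambda_\A^{\p\A}\colon\p\A\ra G(F(\p\A))$ symmetrically from the other composite. It then remains to check the pseudo-inverse agreement identities $\Lambda_\B^{F(\p\A)}=F(\Lambda_\A^{\p\A})$ and $\Lambda_\A^{G(\p\B)}=G(\Lambda_\B^{\p\B})$; these follow from naturality of $\Lambda_\A,\Lambda_\B$ and functoriality of $F,G$ by exactly the computation in the proof of \cref{th:enumbiimplcompbi}, with $\xi^F,\xi^G$ now playing the roles of $\Theta,\Omega$ there.

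\textbf{From enumerable bi-transformability to effective bi-interpretability.} Suppose $F\colon Iso(\A)\ra Iso(\B)$ and $G\colon Iso(\B)\ra Iso(\A)$ are enumerable functors that are pseudo-inverses, so in particular $F\circ G$ and $G\circ F$ are effectively isomorphic to the identity functors. Applying \cref{prop:enumimplint} to $F$ yields an effective interpretation of $\B$ in $\A$ with computable equivalence relation, and applying it to $G$ yields one of $\A$ in $\B$; write $\zeta^F_{\p\A}\colon(\eidomain{B}{\p A},\dots)/_\sim\ra F(\p\A)$ and $\zeta^G_{\p\B}\colon(\eidomain{A}{\p B},\dots)/_\sim\ra G(\p\B)$ for the uniformly computable isomorphisms built in those proofs. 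By \cref{prop:enumeffisoif} the induced functors $I^F,I^G$ are effectively isomorphic to $F,G$, hence $I^F\circ I^G$ is effectively isomorphic to $F\circ G$ and therefore to $id_{Iso(\B)}$, and likewise $I^G\circ I^F$ is effectively isomorphic to $id_{Iso(\A)}$. I would then verify the two composition conditions of \cref{definition:effectivebiint}. Consider $f^\A_\B\circ\p f^\B_\A\colon\mc{D}om_\B^{\eidomain{A}{B}}\ra\B$. Up to the uniformly computable isomorphisms $\zeta^F,\zeta^G$ and their tuple extensions carried along by the two applications of \cref{prop:enumimplint}, the isomorphism $\mc{D}om_\B^{\eidomain{A}{\p B}}/_\sim\ra\p\B$ induced on $\sim$-classes corresponds to the inverse of the Turing-functional witness of the effective isomorphism $I^F\circ I^G\cong id_{Iso(\B)}$; since that witness is a Turing functional, $F,G$ are enumerable functors (so $G(\B)$, $F(G(\B))$ and the relevant interpretation domains are uniformly computable from $\B$), and every identifying map is uniformly computable in $Iso(\B)$, the composite $f^\A_\B\circ\p f^\B_\A$ is uniformly computable from $\B$, hence u.r.i.\ computable. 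The condition for $f^\B_\A\circ\p f^\A_\B$ follows symmetrically from $I^G\circ I^F$, so $\A$ and $\B$ are effectively bi-interpretable with computable equivalence relations.

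\textbf{The main obstacle.} The step I expect to require the most care is the bookkeeping around the nested domains and the auxiliary coordinates. A composite domain lives in $(B^{<\omega})^{<\omega}$ rather than in $B^{<\omega}$, and the constructions of \cref{prop:intimplenum,prop:enumimplint} decorate elements with further data---the minimal $\sim$-representative coordinate built into $\Psi$, and the witness tuples and formula indices introduced in passing from the $\Sicom{1}$ domain to the $\Decom{1}$ domain $\eidomstar{A}{B}$---so that the ``same'' copy of $\B$ turns up in several superficially distinct encodings. One has to exhibit the explicit uniformly computable isomorphisms identifying these encodings, verify that the resulting $\Lambda_\B$ (respectively the resulting composite of interpreting functions) is indeed a single Turing functional, and verify that the pseudo-inverse agreement square commutes on the nose rather than merely up to a computable isomorphism. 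The diagram chase in the proof of \cref{th:enumbiimplcompbi} supplies the template for all of this, so the genuinely new content is confined to checking that the interpretation-side objects coincide with the functor-side objects under the explicit maps $\xi_\B$ and $\zeta_\B$.
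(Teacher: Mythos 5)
Your proposal is correct and follows exactly the route the paper intends: the paper omits this proof, saying only that it combines the constructions of \cref{prop:intimplenum} and \cref{prop:enumimplint} with the argument of \cite[Theorem 1.9]{harrison-trainor_computable_2017}, and your two directions (translating the interpretations into functors and matching the u.r.i.\ computable composites with the pseudo-inverse witnesses via $\xi$ and $\zeta$, and conversely) are precisely that argument. Your closing paragraph also correctly identifies where the real work lies, namely the bookkeeping of the nested domains and the on-the-nose verification of the agreement identities.
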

						      	Using the proof of \cref{prop:intimplenum} and \cref{prop:enumimplint} the proof of this theorem is similar to the proof of the statement that computable bi-transformability and effective bi-interpretability are equivalent~\cite[Theorem 1.9]{harrison-trainor_computable_2017}. We therefore omit it here and refer the reader to~\cite[Section 4]{harrison-trainor_computable_2017} for a detailed proof.

						      	Since the proof given in~\cite{harrison-trainor_computable_2017} is uniform we get that the same holds for reduction by uniform enumerable transformation and reduction by effective bi-interpretability if we assume that the structures in the classes have universe $\omega$.
						      	\begin{corollary}\label{cor:ueteffbi}
						      		$\mf{C}$ is uniformly enumerably transformally reducible to $\mf{D}$ iff $\mf{C}$ is reducible by effective bi-interpretability to $\mf{D}$ with the restriction that the equivalence relations $\sim$ of the interpretations are computable.
						      	\end{corollary}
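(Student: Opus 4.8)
The plan is to rerun the equivalence of \cref{th:enumbiiffeffbi} ``with a parameter'', exploiting that the two translations it rests on are uniform. Concretely: in \cref{prop:intimplenum} the enumeration and Turing operators $(\Psi,\Phi_*)$ of the resulting functor are computed from the $\Sicom 1$/$\Decom 1$ formulas of the interpretation alone; and in \cref{prop:enumimplint} the $\Decom 1$ formulas of the resulting interpretation (and the computable $\sim$) are computed from the operators $(\Psi,\Phi_*)$ alone. In neither case does the output depend on a particular structure, only on the fixed syntactic/algorithmic data. Since both notions of reduction between classes fix such data once and for all and then quantify over the structures, this uniformity is exactly what is needed.

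For the forward direction, suppose $\mf C$ is u.e.t.~reducible to $\mf D$ via a subclass $\mf D'\subseteq\mf D$ and pseudo-inverse enumerable functors $F:\mf C\ra\mf D'$, $G:\mf D'\ra\mf C$, with fixed witnesses $(\Psi,\Phi_*)$, $(\Psi',\Phi'_*)$. Fix $\A\in\mf C$ and set $\B=F(\A)$. Restricting $F$ to $Iso(\A)$ and $G$ to $Iso(\B)$ gives enumerable functors between $Iso(\A)$ and $Iso(\B)$ that are still pseudo-inverses, so $\A$ and $\B$ are enumerably bi-transformable, and by \cref{th:enumbiiffeffbi} they are effectively bi-interpretable with computable $\sim$. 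By the remarks above, the interpretation of $\A$ in $\B$ is the one produced by applying \cref{prop:enumimplint} to $(\Psi',\Phi'_*)$, the interpretation of $\B$ in $\A$ the one produced from $(\Psi,\Phi_*)$, and --- following the uniform argument of \cite[Section 4]{harrison-trainor_computable_2017} transported to the enumerable setting --- the Turing functionals witnessing that the two compositions of \cref{definition:effectivebiint} are u.r.i.~computable are obtained uniformly from these operators together with the pseudo-inverse functionals $\Lambda_\A,\Lambda_\B$. Since none of this data varies with $\A$, the same $\Decom 1$ formulas witness effective bi-interpretability with computable $\sim$ of $\A$ and $F(\A)$ for every $\A\in\mf C$; that is, $\mf C$ is reducible to $\mf D$ via effective bi-interpretability with computable $\sim$.

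The converse is symmetric. Given fixed $\Decom 1$ formulas such that every $\A\in\mf C$ is effectively bi-interpretable with some $\B\in\mf D$ via those formulas with computable $\sim$, apply \cref{prop:intimplenum} to the two interpretations to obtain operators $(\Psi,\Phi_*)$, $(\Psi',\Phi'_*)$ depending only on the formulas, and let $F,G$ be the enumerable functors they define. Take $\mf D'$ to be the isomorphism closure of $\{F(\A):\A\in\mf C\}$; this is a subclass of $\mf D$ because $\mf D$ is closed under isomorphism, and $F(\A)\cong\B$ for the $\B$ witnessing bi-interpretability. The uniform translation of the two intrinsic-computability conditions of \cref{definition:effectivebiint} into the pseudo-inverse equations --- once more the content of \cite[Section 4]{harrison-trainor_computable_2017}, read through \cref{prop:intimplenum} --- shows $F$ and $G$ are pseudo-inverses, so $\mf C$ is u.e.t.~reducible to $\mf D$.

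The one nontrivial point, and the reason we appeal to the uniformity of \cite{harrison-trainor_computable_2017} rather than merely to \cref{th:enumbiiffeffbi} as a black box, is the bookkeeping that every functional in that proof --- the effective isomorphisms between the round-trip functors and the identity, the naturality functionals, and the maps realizing the pseudo-inverse equations --- can be named from the fixed operators or fixed formulas, not from the individual $\A$ and $\B$. A further, genuinely minor, point is that $\B$ need not be determined by $\A$ in the definition of reducibility via effective bi-interpretability; this is absorbed by passing to the isomorphism closure $\mf D'$ and using the standing convention that the members of $\mf C$ and $\mf D$ have universe $\omega$, which is what allows the output of \cref{prop:intimplenum} to be viewed as a genuine object of $\mf D$.
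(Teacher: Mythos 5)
Your proposal is correct and follows essentially the same route as the paper, which likewise derives the corollary from the uniformity of the proof of \cref{th:enumbiiffeffbi} (i.e.\ of the argument in \cite[Section 4]{harrison-trainor_computable_2017} adapted via \cref{prop:intimplenum} and \cref{prop:enumimplint}), noting that all operators and formulas depend only on the fixed data and not on the individual structures with universe $\omega$. You simply make explicit the bookkeeping that the paper leaves as a one-sentence remark.
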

                    \section{Conclusion and open questions}
                    It follows from our results in \cref{sec:enumfunc} that enumerable functors and u.e.t.\ reduction preserve all properties preserved by computable functors and u.c.t.\ reduction. Reduction by uniform computable transformations is already a very strong notion of reduction. Montalb\'an~\cite{montalban_computability_2014} showed that bi-interpretability preserves many computability theoretic properties of structures. Using this result and the equivalence to computable functors proven in~\cite{harrison-trainor_computable_2017} we get that computable functors preserve the same properties.

                    It is still open whether the converse of the implications proved in \cref{sec:enumfunc} hold, i.e., if the existence of a computable functor between two structures also implies the existence of an enumerable functor. In light of this we propose the following questions.
                    \begin{question}
                      Are there structures $\A$ and $\B$ such that there is a computable functor $F:Iso(\A)\ra Iso(\B)$ but no enumerable functor?
                    \end{question}
                    \begin{question}
                      Are there computably bi-transformable structures $\A$ and $\B$ which are not enumerably bi-transformable?
                    \end{question}
                    \begin{question}\label{question2}
                      Are there classes of structures $\mf{C}$ and $\mf{D}$ such that $\mf{C}$ is u.c.t.\ reducible to $\mf{D}$ but not u.e.t.\ reducible?
                    \end{question}
                  	\printbibliography%
\end{document}